 \newtheorem{thm}{Theorem}[section]
 \newtheorem{lem}{Lemma}[section]
 \theoremstyle{definition}
 \newtheorem{defn}{Definition}[section]
 \newtheorem{rem}{Remark}[section]
 \numberwithin{equation}{section}
\def\f{\frac}
\def\vep{\varepsilon}
\def\wt #1{\widetilde{#1}}
\def\ov #1{\overline{#1}}
\def\i1n{i=1,\cdots,n}
\def\j1n{j=1,\cdots,n}
\def\ij1n{i,j=1,\cdots,n}
\def\R{\mathbb R}
\newcommand{\be}{\begin{equation}}
\newcommand{\ee}{\end{equation}}
\newcommand{\beq}{\begin{equation*}}
\newcommand{\eeq}{\end{equation*}}
\DeclareMathOperator*{\esssup}{ess\,sup}
\title{Analysis and control of a scalar conservation law \\
   modeling a highly re-entrant manufacturing system}%
\author{
Peipei SHANG\thanks{INRIA Paris-Rocquencourt Centre. Universit\'{e}
Pierre et Marie Curie-Paris 6, UMR 7598 Laboratoire Jacques-Louis
Lions, 75005 Paris, France. E-mail: {\tt Peipei.Shang@inria.fr}. PS
was supported by the large scale INRIA project REGATE (REgulation of
the GonAdoTropE axis).}
\ and Zhiqiang WANG\thanks{School of Mathematical Sciences, Fudan
University, Shanghai 200433, China. Universit\'{e} Pierre et Marie
Curie-Paris 6, UMR 7598 Laboratoire Jacques-Louis Lions, 75005
Paris, France. E-mail: {\tt wzq@fudan.edu.cn}. ZW was partially
supported by the Natural Science Foundation of China grant 10701028
and Fondation Sciences Math\'{e}matiques de Paris. } 
}
\date{March 15, 2010}
\begin{document}
%
%

\maketitle
\begin{abstract}
In this paper, we study a scalar conservation law that models a
highly re-entrant manufacturing system as encountered in
semi-conductor production. As a generalization of \cite{CKWang}, the
velocity function possesses both the local and nonlocal character.
We prove the existence and uniqueness of the weak solution to the
Cauchy problem with initial and boundary data in $L^{\infty}$. We
also obtain the stability (continuous dependence) of both the
solution and the out-flux with respect to the initial and boundary
data. Finally, we prove the existence of an optimal control that
minimizes, in the $L^p$-sense with $p\in [1,\infty)$, the difference
between the actual out-flux and a forecast demand over a fixed time period. \\
\end{abstract}
{\bf Keywords:}\quad Conservation law, nonlocal velocity,
stability, optimal control, re-entrant manufacturing system.\\
{\bf 2000 MR Subject Classification:}\quad
         35L65, 
         49J20, 
         93C20. 
\section{Introduction and main results}

In this paper, we study the scalar conservation law
\be \label{eq} \rho_t(t,x)+(\rho(t,x)\lambda(x,W(t)))_x=0, \quad
t\geq 0, 0\leq x\leq 1, \ee
where
\beq W(t)=\int_0^1\rho(t,x)dx. \eeq
We assume that the velocity function $\lambda >0$ is continuous
differentiable, i.e., $\lambda \in C^1([0,1] \times [0,\infty))$, in
the whole paper. For instance, we recall that the special case of
\beq \lambda(x,W)=\frac{1}{1+W} \eeq
was used in \cite{Armbruster06, LaMarca}.

This work is motivated by problems arising in the control of
semiconductor manufacturing systems which are characterized by their
highly re-entrant feature. This character is, in particular,
described in terms of the velocity function $\lambda$ in the model:
it is a function of the total mass $W(t)$ (the integral of the
density $\rho$). As a generalization of \cite{CKWang} (in which
$\lambda=\lambda(W(t))$), here we assume that the velocity $\lambda$
varies also with respect to the local position $x$, as can be
naturally encountered in practice. These phenomena also appear in
some biologic models (modeling the development of ovarian follicles,
see \cite{Clement05, Clement07}) and pedestrian flow models (see
\cite{Coscia, Colombo09, CHerty09}).

In the manufacturing system, with a given initial data
 \be \label{eq-IC} \rho(0,x)=\rho_0(x),  \quad 0\leq x \leq 1, \ee
the natural control input is the in-flux, which suggests the
boundary condition
 \be \label{eq-BC}\rho(t,0)\lambda(0,W(t)) = u(t), \quad t\geq 0. \ee
Motivated by applications, one natural control problem is related to
the \emph{Demand Tracking Problem (DTP)}. The objective of
\emph{DTP} is to minimize the difference between the actual out-flux
$y(t)=\rho(t,1)\lambda(1,W(t))$ and a given demand forecast $y_d(t)$
over a fixed time period. An alternative control problem is
\emph{Backlog Problem (BP)}. The objective of \emph{BP} is to
minimize the difference between the number of the total products
that have left the factory and the number of the total demanded
products over a fixed time period. The backlog of a production
system at a given time $t$ is defined as
  \beq \label{opt-backlog}
  \beta (t)=\int_0^t \rho(s,1)\lambda(1,W(s))ds -\int_0^t y_d(s) ds.
   \eeq
The backlog $\beta(t)$ can be negative or positive, with a positive
backlog corresponding to overproduction and a negative backlog
corresponding to a shortage.

Partial differential equation models for such manufacturing systems
are motivated by the very high volume (number of parts manufactured
per unit time) and the very large number of consecutive production
steps. They are popular due to their superior analytic properties
and the availability of efficient numerical tools for simulation.
For more detailed discussions, see e.g. \cite{Armbruster06SIAM,
Armbruster06, Herty07, LaMarca}. In many aspects these models are
quite similar to those of traffic flows \cite{Coclite05} and
pedestrian flows \cite{Coscia, Colombo09, CHerty09}.

The hyperbolic conservation laws and related control problems have
been widely studied for a long time. The fundamental problems
include the existence, uniqueness, regularity and continuous
dependence of solutions, controllability, asymptotic stabilization,
existence and uniqueness of optimal controls. For the well-posedness
problems, we refer to the works \cite{BBressan, BressanBook,
LeFlochBook, LiuYang} (and the references therein) in the content of
weak solutions to systems (including scalar case) in conservation
laws, and \cite{LiBook94, LiYuBook} in the content of classical
solutions to general quasi-linear hyperbolic systems. For the
controllability of linear hyperbolic systems, one can see the
important survey \cite{Russell}. The controllability of nonlinear
hyperbolic equations (or systems) are studied in \cite{Coron,
CGWang, Glass, Gugat, Horsin, LiBook09, LiRao}, while the attainable
set and asymptotic stabilization of conservation laws can be found
in \cite{Ancona07, Ancona98}. In particular, \cite{CoronBook}
provides a comprehensive survey of controllability and stabilization
in partial differential equations that also includes nonlinear
conservation laws.

We prove the existence, uniqueness and regularity of the weak
solution to Cauchy problem \eqref{eq}, \eqref{eq-IC} and
\eqref{eq-BC} with initial and boundary data in $L^{\infty}$. The
main approach is the characteristic method. We point out here that
in the previous paper \cite{CKWang}, the authors obtained the
well-posedness for  $L^p\ (1\leq p<\infty)$ data. The $L^{\infty}$
assumption in this paper is due to the fact that the velocity
function $\lambda$ depends on the space variable $x$. Using the
implicit expression of the solution in terms of the characteristics,
we also prove the stability (continuous dependence) of both the
solution and the out-flux with respect to the initial and boundary
data. The stability property guarantees that a small perturbation to
the initial and (or) boundary data produces also only a small
perturbation to the solution and the out-flux.

The optimal control problem that we study in this paper is related
to the \emph{Demand Tracking Problem}. This problem is motivated by
\cite{CKWang} and originally inspired by \cite{LaMarca}. The
objective is to minimize the $L^p$-norm with $p\in [1,\infty)$ of
the difference between the actual out-flux $y(t)=\rho(t,1)
\lambda(1,W(t))$ and a given demand forecast $y_d(t)$ over a fixed
time period. With the help of the implicit expression of the weak
solution and by compactness arguments, we prove the existence of
solutions to this optimal control problem.

The main difficulty of this paper comes from the nonlocal velocity
in the model. A related manuscript \cite{CHerty09}, which is also
motivated in part by \cite{Armbruster06, LaMarca}, addressed
well-posedness for systems of hyperbolic conservation laws with a
nonlocal velocity in $\R^n$. The authors studied the Cauchy problem
in the whole space $\R^n$ without considering any boundary
conditions and they also gave a necessary condition for the possible
optimal controls. However, the method of proof and even the
definition of solutions are different from this paper. Another
scalar conservation law with nonlocal velocity is to model
sedimentation of particles in a dilute fluid suspension, see
\cite{Zumbrun} for the well-posedness of the Cauchy problem. In this
model, the nonlocal velocity is due to a convolution of the unknown
function with a symmetric smoothing kernel. There are also some
other one-dimensional models with nonlocal velocity, either in
divergence form or not,  which are related to the 3D Navier-Stokes
equations or the Euler equations in the vorticity formulation.
Nevertheless, the nonlocal character in these models comes from a
singular integral of the unknown function (see \cite{Dong} and the
references therein, especially \cite{CLMajda}).

The organization of this paper is as follows: First in Section 2
some basic notations and assumptions are given. Next in Section 3 we
prove the existence and uniqueness of the weak solution to Cauchy
problem \eqref{eq}, \eqref{eq-IC} and \eqref{eq-BC} with the initial
data $\rho_0\in L^{\infty}(0,1)$ and boundary data $u\in
L^{\infty}(0,T)$. Some remarks on the regularity of the weak
solution to the Cauchy problem are also given in Section 3. In
Section 4 we establish the stability of the weak solution and the
out-flux with respect to the initial and boundary data. Then in
Section 5, we prove the existence of the solution to the optimal
control problem of minimizing the $L^p$-norm of the difference
between the actual and any desired (forecast) out-flux. Finally in
the appendix, we give two basic lemmas and the proofs of Lemmas
\ref{rho-sol}-\ref{uni-sol} that are used in Section 3.

\section{Preliminaries}
First we introduce some notations which will be used in the whole
paper:
 \begin{align*}
 &  L^{\infty}_+(0,1):=\{f\in L^{\infty}(0,1)\colon
   \text{$f$ is nonnegative almost everywhere}\},
   \\
 &  L^{\infty}_+(0,T):=\{f\in L^{\infty}(0,T)\colon
      \text{$f$ is nonnegative almost everywhere}\},
   \\
 & \|\rho_0\|_{L^{\infty}}:= \|\rho_0\|_{L^{\infty}(0,1)}:= \esssup_{0\leq x\leq 1}
    |\rho_0(x)|,
   \\
 &   \|u\|_{L^{\infty}}:=\|u\|_{L^{\infty}(0,T)}:= \esssup_{0\leq x\leq T} |u(t)|
\end{align*}
and
 \begin{align}
   \label{def-M}
 & M:=\|u\|_{L^1(0,T)}+\|\rho_0\|_{L^1(0,1)},
   \\\label{ov-lam}
 & \ov \lambda(M):= \inf_{(x,W)\in [0,1]\times [0,M]}
   \lambda(x,W)>0,
    \\ \label{lam-bound}
 & \|\lambda\|_{C^0}:= \|\lambda\|_{C^0([0,1]\times [0,M])}:=\sup_{(x,W)\in [0,1]\times [0,M]}
   |\lambda(x,W)|,
    \\ \label{lam-x-bound}
 & \|\lambda_x\|_{C^0}:=\|\lambda_x\|_{C^0([0,1]\times [0,M])} := \sup_{(x,W)\in [0,1]\times [0,M]}
   |\lambda_x(x,W)|,
    \\ \label{lam-w-bound}
 & \|\lambda_W\|_{C^0}:= \|\lambda_W\|_{C^0([0,1]\times [0,M])} :=\sup_{(x,W)\in [0,1]\times [0,M]}
   |\lambda_W(x,W)|.
 \end{align}

We also define the characteristic curve $\xi=\xi(s;t,x)$, which
passes through the point $(t,x)$, by the solution to the ordinary
differential equation
 \be \label{xi} \f{d\xi}{ds}=\lambda(\xi(s),W(s)), \quad
 \xi(t)=x, \ee
where $W$ is a continuous function. The existence and uniqueness of
the solution to \eqref{xi} is guaranteed by the assumption that
$\lambda\in C^1([0,1]\times [0,\infty))$ with $|W(s)|\leq M$ for all
$s$. The characteristic curve $\xi$ is frequently used afterward and
it is precisely illustrated in different situations.

\section{Well-posedness of Cauchy Problem with $L^{\infty}$ data}

First we recall, from \cite[Section 2.1]{CoronBook}, the usual
definition of a weak solution to Cauchy problem \eqref{eq},
\eqref{eq-IC} and \eqref{eq-BC}.
\begin{defn}
\label{weaksol} Let $T>0$, $\rho_0\in L^{\infty}(0,1)$ and $u\in
L^{\infty}(0,T)$ be given. A weak solution of Cauchy problem
\eqref{eq}, \eqref{eq-IC} and \eqref{eq-BC} is a function $\rho\in
C^0([0,T];L^1(0,1)) \cap L^{\infty}((0,T)\times (0,1))$ such that,
for every $\tau\in[0,T]$ and every $\varphi\in
C^1([0,\tau]\times[0,1])$ with
 \beq
 \varphi(\tau,x)= 0,\ \forall x\in[0,1]\quad \text{and}\quad
 \varphi(t,1)= 0,\ \forall t\in[0,\tau],
 \eeq
one has
 \begin{multline*}
 \int_0^{\tau} \int_0^1 \rho(t,x)(\varphi_t(t,x)
 +\lambda(x,W(t))\varphi_x(t,x)) dx dt
   \\
  +\int_0^{\tau} u(t)\varphi(t,0)dt
  +\int_0^1 \rho_0(x)\varphi(0,x)dx=0.
 \end{multline*}
\end{defn}

\begin{thm}\label{thm-sol}
Let $T>0$, $\rho_0\in L^{\infty}_+(0,1)$ and $u\in
L^{\infty}_+(0,T)$ be given, then Cauchy problem \eqref{eq},
\eqref{eq-IC} and \eqref{eq-BC} admits a unique weak solution
$\rho\in C^0([0,T];L^1(0,1)) \cap L^{\infty}((0,T)\times (0,1))$,
which is nonnegative almost everywhere in $[0,T]\times [0,1]$.
Moreover, the weak solution $\rho$ even belongs to
$C^0([0,T];L^p(0,1))$ for all $p\in [1,\infty)$.
\end{thm}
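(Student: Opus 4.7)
My plan is to treat the nonlocal coefficient $W(t)$ as a frozen datum, solve the resulting linear transport equation by characteristics, and recover $W$ through a fixed point argument on the complete metric space $\mathcal{W}:=\{W\in C^0([0,T])\colon 0\le W(t)\le M\text{ for all }t\}$ equipped with the sup norm. For $W\in\mathcal{W}$, the ODE \eqref{xi} has a unique $C^1$ flow $\xi(\cdot;t,x)$, and rewriting \eqref{eq} in non-conservative form gives the scalar ODE $\dot\rho=-\rho\,\lambda_x(\xi(s),W(s))$ along characteristics. This produces an explicit candidate $\rho^W(t,x)$ by tracing $\xi$ back either to $(0,x_0)$, with initial value $\rho_0(x_0)$, or to $(t_0,0)$, with initial value $u(t_0)/\lambda(0,W(t_0))$, and multiplying by the positive factor $\exp(-\int\lambda_x(\xi,W)\,ds)$. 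In particular $\rho^W\ge 0$, so positivity is built in.

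\emph{A priori bounds and fixed point.} The explicit formula yields
\[
\|\rho^W(t,\cdot)\|_{L^\infty}\le e^{T\|\lambda_x\|_{C^0}}\max\bigl\{\|\rho_0\|_{L^\infty},\,\|u\|_{L^\infty}/\ov\lambda(M)\bigr\},
\]
and testing the weak formulation against $\varphi\equiv 1$ gives $\frac{d}{dt}\int_0^1\rho^W\,dx=u(t)-\rho^W(t,1)\lambda(1,W(t))\le u(t)$, so $\int_0^1\rho^W\,dx\le M$ on $[0,T]$. Hence the map $\mathcal{T}(W)(t):=\int_0^1\rho^W(t,x)\,dx$ sends $\mathcal{W}$ into itself. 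For $W_1,W_2\in\mathcal{W}$, Gronwall on the characteristic equations gives $|\xi^{W_1}-\xi^{W_2}|\le CT\|W_1-W_2\|_\infty$; changing variable from $x$ to the foot-point $x_0$ (or $t_0$) in the integral defining $\mathcal{T}(W_i)(t)$ then yields $\|\mathcal{T}(W_1)-\mathcal{T}(W_2)\|_\infty\le C(T)\|W_1-W_2\|_\infty$ with $C(T)\to 0$ as $T\to 0$. For $T$ small enough, Banach's theorem produces a unique fixed point $W$, and since the $L^1$ and $L^\infty$ bounds on $\rho^W$ are preserved, the construction iterates to cover any prescribed final time.

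\emph{Weak solution, uniqueness and continuity in time.} One verifies, by integration by parts along the characteristic flow, that $\rho:=\rho^W$ (with $W$ the fixed point) satisfies the weak formulation in the definition above. Uniqueness follows from the same Gronwall comparison applied to the total masses of any two weak solutions. Finally, $t\mapsto\xi(\cdot;t,x)$ is continuous, hence so is $t\mapsto\rho(t,\cdot)$ in $L^1(0,1)$, and combining this with the uniform $L^\infty$ bound, dominated convergence upgrades continuity to $L^p(0,1)$ for every finite $p$.

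\emph{Main obstacle.} The critical step is the contraction estimate for $\mathcal{T}$. Because the data lies only in $L^\infty$, one cannot bound $\rho^{W_1}-\rho^{W_2}$ pointwise in $x$, so the contraction must be achieved in $L^1$ via the change of variables induced by the characteristic flow. The Jacobian of this change of variables is governed by $\lambda_x$, and its $W$-dependence has to be tracked together with that of the exponential factor $\exp(-\int\lambda_x\,ds)$ and of the boundary trace $u/\lambda(0,W)$, so that the resulting constant $C(T)$ still vanishes as $T\to 0$.
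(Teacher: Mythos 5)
Your existence construction is essentially the paper's: freeze $W$, solve the resulting linear problem along characteristics to obtain the explicit nonnegative candidate carrying the factor $e^{-\int\lambda_x\,ds}$, run a Banach fixed point for $W$ in $C^0$ on a short time interval, and iterate using the uniform $L^1$ and $L^\infty$ a priori bounds. (The paper's map $F$ is exactly your $\mathcal{T}$ after the change of variables you describe: the Jacobian of the foot-point map and the exponential factor cancel identically, so $F(W)(t)=\int_0^t u+\int_0^{f(t)}\rho_0$ with $f(t)$ the foot of the backward characteristic from $(t,1)$, and the contraction constant involves only $t\,\|\rho_0\|_{L^\infty}\|\lambda_W\|_{C^0}$. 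Your ``main obstacle'' is therefore lighter than you fear --- none of the three $W$-dependencies you list survives in the mass integral.)

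The genuine gap is in uniqueness. You propose to ``apply the same Gronwall comparison to the total masses of any two weak solutions,'' but an arbitrary weak solution is defined only through the distributional identity of Definition \ref{weaksol}; it is not known a priori to be given by the characteristic formula, so neither an evolution equation for its total mass nor its boundary trace at $x=1$ is available. In particular you cannot test against $\varphi\equiv 1$ (admissible test functions must vanish at $x=1$, and $\rho(t,1)\lambda(1,W(t))$ has no meaning yet for a competitor solution), so the identity $\frac{d}{dt}\int_0^1\rho\,dx=u(t)-\rho(t,1)\lambda(1,W(t))$ is unjustified at the point where you invoke it. The missing idea is a duality argument: given a weak solution $\ov\rho$ with mass $\ov W$, one first extends the admissible test class as in Lemma \ref{lem-test-function} and then tests against the solution of the backward linear problem $\psi_\tau+\lambda(x,\ov W(\tau))\psi_x=0$, $\psi(t,\cdot)=\psi_0\in C^1_0(0,1)$, $\psi(\cdot,1)=0$. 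This forces $\ov\rho$ to have the characteristic representation, hence $\ov W=F(\ov W)$, and only then does uniqueness of the fixed point give $\ov W=W$ and $\ov\rho=\rho$. Without this step the Gronwall comparison has nothing to act on. (For the solution you construct explicitly, the bound $W\le M$ should instead be read off from the representation \eqref{wt}, not from testing with $\varphi\equiv 1$.)
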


\begin{proof} Our proof is partly inspired from \cite{CGWang}.
We first prove the existence of weak solution for small time: there
exists a small $\delta\in (0,T]$ such that Cauchy problem
\eqref{eq}, \eqref{eq-IC} and \eqref{eq-BC} has a weak solution
$\rho\in C^0([0,\delta];L^1(0,1)) \cap L^{\infty}((0,\delta)\times
(0,1))$. The idea is first to prove that the total mass $W(t)$
exists as a fixed point of a map $W \mapsto F(W)$, and then to
construct a (unique) solution to the Cauchy problem.

Let
 \be\label{Omega}
 \Omega_{\delta,M}
  :=\Big\{W\in C^0([0,\delta]) \colon
  \|W\|_{C^0[0,\delta]}:=\sup_{0\leq t\leq \delta} |W(t)|\leq M
  \Big\},
 \ee
where the constant $M$ is given by \eqref{def-M}.

For any small $\delta>0$, we define a map $F:\Omega_{\delta,M}
\rightarrow C^0([0,\delta])$, $W\mapsto F(W)$, as
 \be \label{FW}
  F(W)(t):=\int_0^t u(\alpha)d\alpha
   +\int_0^{1-\int_0^t\lambda(\xi_1(\theta),W(\theta))d\theta} \rho_0(\beta)d\beta,
   \quad  \forall t \in [0,\delta],
 \ee
where $\xi_1$ (see Fig \ref{Fig1} or Fig \ref{Fig2}) represents the
characteristic curve passing through $(t,1)$ which is defined by
 \be \label{xi1} \f{d\xi_1}{ds}=\lambda(\xi_1(s),W(s)), \quad
 \xi_1(t)=1.\ee
Here we remark that the formulation of $F(W)$ is induced by solving
the corresponding \emph{linear Cauchy problem} \eqref{eq},
\eqref{eq-IC} and \eqref{eq-BC} in which $W(\cdot)\in
\Omega_{\delta,M}$ is known. It is obvious that $F$ maps into
$\Omega_{\delta,M}$ itself if
 \beq   0<\delta \leq \min \Big \{ \frac{1}{\|\lambda\|_{C^0}}, T \Big\}.
 \eeq

Now we prove that, if $\delta$ is small enough, $F$ is a contraction
mapping on $\Omega_{\delta,M}$ with respect to the $C^0$ norm. Let
$W,\ov W \in \Omega_{\delta,M}$ and for any fixed $t\in [0,T]$, we
define $\ov \xi_1$ by
 \beq \f{d\ov\xi_1}{ds}=\lambda(\ov\xi_1(s),\ov W(s)), \quad
 \ov\xi_1(t)=1.\eeq
Then, we have for every $t\in [0,\delta]$ that
 \begin{align*}
  &\quad |F(\ov W)(t)-F(W)(t)| =
     \Big| \int_{1-\int_0^t\lambda(\xi_1(\theta),W(\theta))d\theta}
     ^{1-\int_0^t\lambda(\ov \xi_1(\theta),\ov W(\theta))d\theta}
     \rho_0(\beta)d\beta \Big|
   \\
  &\leq \|\rho_0\|_{L^{\infty}} \cdot
     \Big| \int_0^t (\lambda(\ov \xi_1(\theta),\ov W(\theta))
       -\lambda(\xi_1(\theta),W(\theta))) d\theta  \Big|
   \\
  & \leq t \|\rho_0\|_{L^{\infty}} \cdot
     \Big( \|\lambda_x\|_{C^0} \|\ov \xi_1-\xi_1\|_{C^0([0,t])}
      + \|\lambda_W\|_{C^0} \|\ov W-W\|_{C^0([0,\delta])} \Big).
 \end{align*}
 By the definitions of $\xi_1$ and $\ov \xi_1$, we obtain
 \begin{align*}
   & \quad  \|\ov \xi_1-\xi_1\|_{C^0([0,t])}
    = \sup_{0\leq \theta \leq t} |\ov \xi_1(\theta)-\xi_1(\theta)|
   \nonumber    \\
  &  = \sup_{0\leq \theta \leq t} \Big| \int_{\theta}^t (\lambda (\ov \xi_1(\sigma),\ov W(\sigma))
      -\lambda (\xi_1(\sigma), W(\sigma))) d\sigma \Big |
    \nonumber  \\
   & \leq t \|\lambda_x\|_{C^0} \|\ov \xi_1-\xi_1\|_{C^0([0,t])}
       +  t \|\lambda_W\|_{C^0} \|\ov W- W\|_{C^0([0,\delta])},
 \end{align*}
thus
 \be \label{ov xi1-xi1} \|\ov \xi_1-\xi_1\|_{C^0([0,t])}
   \leq  \f {t \|\lambda_W\|_{C^0}}{1-t \|\lambda_x\|_{C^0}}
   \cdot \|\ov W-W\|_{C^0([0,\delta])}.  \ee
 Therefore,
 \beq
  |F(\ov W)(t)-F(W)(t)| \leq
  \f {t \|\rho_0\|_{L^{\infty}} \|\lambda_W\|_{C^0}}{1-t \|\lambda_x\|_{C^0} }
   \cdot \|\ov W-W\|_{C^0([0,\delta])}, \quad \forall t\in [0,\delta].
 \eeq
Let $\delta$ be such that
 \be \label{delta}
  0< \delta \leq \min \Big\{\f{1}{2 \|\lambda_x\|_{C^0} },
   \f{1}{4 \|\rho_0\|_{L^{\infty}} \|\lambda_W\|_{C^0} },
    \frac{1}{\|\lambda\|_{C^0}}, T \Big\},   \ee
then
 \beq
  \|F(\ov W)-F(W)\|_{C^0([0,\delta])}
  \leq \f 12 \|\ov W-W\|_{C^0([0,\delta])}.
 \eeq
By means of the contraction mapping principle, there exists a unique
fixed point $W=F(W)$ in $\Omega_{\delta,M}$:
  \beq W(t)=F(W)(t)=\int_0^t u(\alpha)d\alpha
   +\int_0^{1-\int_0^t\lambda(\xi_1(\theta),W(\theta))d\theta} \rho_0(\beta)d\beta,
   \quad  \forall t \in [0,\delta].
 \eeq
Moreover, $W$ is Lipschitz continuous:
 \beq  W(t)=W(0)+\int_0^t W'(s)ds,  \eeq
with
   \beq
     W'(t)=u(t)-\lambda(\xi_1(t),W(t)) \rho_0(1-\int_0^t \lambda(\xi_1(\theta),W(\theta))d\theta),
     \quad  t\in [0, \delta],
  \eeq
and thus
 \be \label{w't-bound-loc}
     \|W'\|_{L^{\infty}(0,\delta)}
   \leq  \|u\|_{L^{\infty}}
         + \|\lambda\|_{C^0} \|\rho_0\|_{L^{\infty}}.  \ee

Now we define the characteristic curve $\xi_2$ which passes through
the origin (see Fig \ref{Fig1} and Fig \ref{Fig2}) by
 \be \label{xi2} \f{d\xi_2}{ds}=\lambda(\xi_2(s),W(s)),
   \quad \xi_2(0)=0.\ee
And then for any fixed $t\in [0,\delta]$, we define the
characteristic curves $\xi_3, \xi_4$ (see Fig \ref{Fig1} and Fig
\ref{Fig2}) which pass through $(t,x)$ by
 \begin{align}\label{xi3}
   &\f{d\xi_3}{ds}=\lambda(\xi_3(s),W(s)), \quad \xi_3(t)=x, \quad \text{for}\  x \in
   [0,\xi_2(t)].
    \\\label{xi4}
   &\f{d\xi_4}{ds}=\lambda(\xi_4(s),W(s)), \quad \xi_4(t)=x, \quad \text{for}\  x \in
   [\xi_2(t),1].
 \end{align}
 From the uniqueness of the solution to the ordinary differential
 equation, we know that there exist $\alpha\in [0,\delta]$ and $\beta\in
 [0,1]$ such that
 \be \label{alp-bet} \xi_3(\alpha)=0
 \quad \text{and } \quad \xi_4(0)=\beta.
 \ee
Now we define a function $\rho$ by
 \be \label{rho-loc}
 \rho(t,x):=
   \begin{cases}
    \displaystyle \f{u(\alpha)}{\lambda(0,W(\alpha))}
     \, e^{-\int_{\alpha}^t \lambda_x(\xi_3(\theta),W(\theta)) d\theta},\quad
   & 0\leq x \leq \xi_2(t) ,0\leq t\leq \delta,
   \\
   \rho_0(\beta)\,  e^{-\int_0^t \lambda_x(\xi_4(\theta),W(\theta)) d\theta },\quad
   & 0\leq \xi_2(t)\leq x, 0\leq t\leq \delta,
  \end{cases}
 \ee
which is obviously nonnegative almost everywhere in $(0,\delta)
\times (0,1)$. Using the following two lemmas, we can prove that
$\rho$ defined by \eqref{rho-loc} is the unique weak solution to the
Cauchy problem \eqref{eq}, \eqref{eq-IC} and \eqref{eq-BC}.

\begin{lem} \label{rho-sol} The function $\rho$ defined by \eqref{rho-loc}
is a weak solution to Cauchy problem \eqref{eq}, \eqref{eq-IC} and
\eqref{eq-BC}. Moreover, the weak solution $\rho$ even belongs to
$C^0([0,\delta];L^p(0,1))$ for all $p\in [1,\infty)$ and the
following two estimates hold for all $t\in [0,\delta]$:
 \begin{align}\label{wt-bound-loc}
   & 0\leq W(t)=\|\rho(t,\cdot)\|_{L^1(0,1)}\leq M,
     \\\label{rho-bound-loc}
   &  \|\rho(t,\cdot)\|_{L^{\infty} (0,1)}
     \leq  e^{T\|\lambda_x\|_{C^0}}
        \cdot \max \Big\{ \|\rho_0\|_{L^{\infty}},
                 \f{\|u\|_{L^{\infty}}}{\ov \lambda(M)}\Big\}.
 \end{align}
\end{lem}

\begin{lem} \label{uni-sol} The weak solution to Cauchy problem
\eqref{eq}, \eqref{eq-IC} and \eqref{eq-BC} is unique.
\end{lem}

We leave the proofs of Lemma  \ref{rho-sol} and Lemma \ref{uni-sol}
in Appendix.

Now we suppose that we have solved Cauchy problem \eqref{eq},
\eqref{eq-IC} and \eqref{eq-BC} to the moment $\tau \in (0,T)$ with
the weak solution $\rho\in C^0([0,\tau];L^p(0,1)) \cap
L^{\infty}((0,\tau)\times (0,1))$. Similar to Lemma \ref{rho-sol}
and Lemma \ref{uni-sol}, we know that this weak solution is given by
 \beq
 \rho(t,x)=
   \begin{cases}
     \rho_0(\beta)\,  e^{-\int_0^t \lambda_x(\xi_4(\theta),W(\theta)) d\theta },\quad
   & \text{if} \  0\leq \xi_2(t)\leq x \leq 1,0\leq t\leq \tau,
       \\
    \displaystyle \f{u(\alpha)}{\lambda(0,W(\alpha))}
     \, e^{-\int_{\alpha}^t \lambda_x(\xi_3(\theta),W(\theta)) d\theta}, \quad
   &   \text{else}.
  \end{cases}
 \eeq
Moreover, the two uniform a priori estimates \eqref{wt-bound-loc}
and \eqref{rho-bound-loc} hold for all $t\in [0,\tau]$. Hence we can
choose $\delta\in (0,T)$ independent of $\tau$ such that
\eqref{delta} holds. Applying Lemma \ref{rho-sol} and Lemma
\ref{uni-sol} again, the weak solution $\rho\in C^0([0,\tau];
L^p(0,1))$, as well as estimates \eqref{wt-bound-loc} and
\eqref{rho-bound-loc}, is extended to the time interval
$[\tau,\tau+\delta] \cap [\tau,T]$. Step by step, we finally have a
unique global weak solution $\rho\in C^0([0,T];L^p(0,1)) \cap
L^{\infty}((0,T)\times (0,1))$. This finishes the proof of Theorem
\ref{thm-sol}.
\end{proof}

\begin{rem}\label{rem-rho}
Let $\rho$ be the weak solution in Theorem \ref{thm-sol} and $W\in
C^0([0,T])$ be the total mass function: $W(t)=\int_0^1\rho(t,x)dx$.
Let $\xi_1, \xi_2,\xi_3,\xi_4$ and $\alpha, \beta$ be defined by
\eqref{xi1}, \eqref{xi2}, \eqref{xi3}, \eqref{xi4} and
\eqref{alp-bet}, respectively. It follows from our proof of Theorem
\ref{thm-sol} that (see Fig \ref{Fig3}, Fig \ref{Fig4} and Fig
\ref{Fig5})
 \be \label{rho}
 \rho(t,x)=
   \begin{cases}
     \rho_0(\beta)\,  e^{-\int_0^t \lambda_x(\xi_4(\theta),W(\theta)) d\theta },\quad
   & \text{if} \  0\leq \xi_2(t)\leq x \leq 1,0\leq t\leq T,
       \\
    \displaystyle \f{u(\alpha)}{\lambda(0,W(\alpha))}
     \, e^{-\int_{\alpha}^t \lambda_x(\xi_3(\theta),W(\theta)) d\theta}, \quad
   & \text{else}.
  \end{cases}
 \ee
and the following estimate holds:
  \be  \label{rho-bound} \|\rho(t,\cdot)\|_{L^{\infty} (0,1)}
   \leq  e^{T\|\lambda_x\|_{C^0}}
   \cdot \max \Big\{ \|\rho_0\|_{L^{\infty}},
                 \f{\|u\|_{L^{\infty}}}{\ov \lambda(M)}\Big\},
   \quad \forall t\in [0,T]. \ee
Moreover, $W(t)$ can be expressed as (see Fig \ref{Fig5})
 \be \label{wt}
     W(t)=
 \begin{cases}\displaystyle
    \int_0^t u(\alpha) d\alpha
     +\int_0^{1-\int_0^t \lambda(\xi_1(\theta),W(\theta))d\theta}
        \rho_0(\beta)d\beta,\quad & 0\leq t\leq
       \xi_2^{-1}(1),
   \\\displaystyle
      \int_{\xi_1^{-1}(0)}^t u(\alpha) d\alpha,
    &  \xi_2^{-1}(1) \leq t\leq  T,
 \end{cases}
 \ee
which implies again that
 \be \label{wt-bound}
 0 \leq W(t)=\|\rho(t,\cdot)\|_{L^1(0,1)} \leq  M,\quad \forall t\in [0,T].
 \ee

Finally, $W$ is Lipschitz continuous:
 \beq  W(t)=W(0)+\int_0^t W'(s)ds,  \eeq
where (see Fig \ref{Fig5})
 \beq
     W'(t)=
 \begin{cases}\displaystyle
     u(t)-\lambda(\xi_1(t),W(t)) \rho_0(1-\int_0^t \lambda(\xi_1(\theta),W(\theta))d\theta),\quad
     & 0\leq t\leq  \xi_2^{-1}(1),
  \\ \displaystyle
     u(t)-u(\xi_1^{-1}(0))\f{\lambda(1,W(t))}{\lambda(0,W(\xi_1^{-1}(0)))}
     \,e^{-\int_{\xi_1^{-1}(0)}^t\lambda_x(\xi_1(\theta),W(\theta))\, d\theta},
    &  \xi_2^{-1}(1)  \leq t\leq  T
\end{cases}
 \eeq
and
 \be \label{w't-bound}
     \|W'\|_{L^{\infty}(0,T)}
   \leq  \|u\|_{L^{\infty}}
         + \|\lambda\|_{C^0}
          \cdot \max \Big\{ \|\rho_0\|_{L^{\infty}},
             \f{\|u\|_{L^{\infty}}}{\ov \lambda(M)}\, e^{T\|\lambda_x\|_{C^0}} \Big\}
    < \infty.  \ee

\end{rem}

\begin{rem} \label{rem-hid-reg}
{\bf (Hidden regularity.)} From the definition of the weak solution,
we can expect $\rho\in L^{\infty}((0,T)\times (0,1))=
L^{\infty}(0,1; L^{\infty}(0,T))$. In fact, under the assumptions of
Theorem \ref{thm-sol}, we have the hidden regularity that $\rho\in
C^0([0,1];L^p(0,T))$ for all $p\in [1,\infty)$ so that the function
$t \mapsto \rho(t,x)\in L^p(0,T)$ is well defined for every fixed
$x\in [0,1]$. The proof of the hidden regularity is quite similar to
our proof of $\rho\in C^0([0,T];L^p(0,1))$ by means of the implicit
expressions \eqref{rho} for $\rho$ and \eqref{wt} for $W(t)$ (see
also \eqref{w't-bound} when $T$ is large).
\end{rem}

\begin{rem}
If $\rho_0\in C^0([0,1])$ and $u\in C^0([0,T])$ are nonnegative and
the $C^0$ compatibility condition is satisfied at the origin:
 \beq \f{u(0)}{\lambda(0,W(0))}-\rho_0(0)=0, \eeq
where $W(0)=\int_0^1\rho_0(x)dx,$
then Cauchy problem \eqref{eq}, \eqref{eq-IC} and \eqref{eq-BC}
admits a unique nonnegative solution $\rho\in C^0([0,T]\times
[0,1])$. If, furthermore, $\rho_0\in C^1([0,1])$ and $u\in
C^1([0,T])$ are nonnegative and the $C^1$ compatibility conditions
are satisfied at the origin:
 \beq \begin{cases}
        \displaystyle
    \f{u(0)}{\lambda(0,W(0))}-\rho_0(0)=0,\\
       \displaystyle
    \f{u'(0)\lambda(0,W(0))-u(0)\lambda_W(0,W(0))W'(0)}{|\lambda(0,W(0))|^2}
     +\lambda(0,W(0))\rho_0'(0)+\lambda_x(0,W(0))\rho_0(0)=0,
  \end{cases}
 \eeq
where  $ W(0)=\int_0^1\rho_0(x)dx$ and $W'(0)=u(0)-
\rho_0(1)\lambda(1,W(0))$, then Cauchy problem \eqref{eq},
\eqref{eq-IC} and \eqref{eq-BC} admits a unique nonnegative
classical solution $\rho\in C^1([0,T]\times [0,1])$.
\end{rem}

\section{Stability with respect to the initial and boundary data}

In this section, we study the stability (or continuous dependence)
of both the solution $\rho$ itself and the out-flux $y$ with respect
to $\rho_0$ and $u$. That is to say: if the initial and boundary
data are slightly perturbed, are the solution $\rho$ and the
out-flux $y$ \emph{also slightly perturbed}?

Let $\ov \rho$ be the weak solution to the Cauchy problem with the
perturbed initial and boundary conditions
 \be \label{ov rho-eqn}
 \begin{cases}
 \ov\rho_t(t,x)+(\ov\rho(t,x)\lambda(x,\ov W(t)))_x=0, \quad &
   t\geq 0, 0\leq x\leq 1, \\
 \ov\rho(0,x)=\ov\rho_0(x),\quad &0\leq x\leq 1,\\
  \ov\rho(t,0)\lambda(0,\ov W(t))=\ov u(t), \quad &0\leq t\leq T,
 \end{cases}
 \ee
where $\ov W(t):=\int_0^1 \ov \rho(t,x)dx$.  We denote that $\ov
y(t):=\ov\rho(1,t)\lambda(1,\ov W(t))$. We also define the
corresponding characteristics with respect to the perturbed solution
$\ov \rho$\,: $\ov\xi_1$ (as \eqref{xi1}), $\ov\xi_2$ (as
\eqref{xi2}), $(\ov\xi_3,\ov\alpha)$ (as \eqref{xi3} and
\eqref{alp-bet}) and $(\ov\xi_4,\ov\beta)$ (as \eqref{xi4} and
\eqref{alp-bet}), respectively.

First we have the following theorem on the stability of the weak
solution $\rho$.

\begin{thm} \label{thm-stab-sol}
For any $\vep>0$, $p\in [1,\infty)$ and any $K>0$ such that
 \be  \label{K}
   \|\rho_0\|_{L^{\infty}(0,1)} + \|u\|_{L^{\infty}(0,T)} \leq K,
     \quad
  \|\ov \rho_0\|_{L^{\infty}(0,1)} + \|\ov u\|_{L^{\infty}(0,T)} \leq K, \ee
there exists $\eta=\eta(\vep,p,K)>0$ small enough such that, if
 \be\label{IBC-small}
   \|\ov \rho_0 -\rho_0\|_{L^p(0,1)}
  +\|\ov{u}-u\|_{L^p(0,T)}< \eta,
  \ee
then
 \be \label{stab-sol}\|\ov \rho(t,\cdot) -\rho(t,\cdot)\|_{L^p(0,1)}< \vep,
  \quad \forall t\in [0,T]. \ee
\end{thm}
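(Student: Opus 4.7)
The plan is to propagate the \emph{explicit characteristic representation} \eqref{rho} through both solutions. Under \eqref{K} all quantities $M$, $\ov\lambda(M)$, $\|\lambda\|_{C^0}$, $\|\lambda_x\|_{C^0}$, $\|\lambda_W\|_{C^0}$ and the uniform $L^\infty$-bound \eqref{rho-bound} for $\rho$ and $\ov\rho$ depend only on $K$, $T$ and $\lambda$. Since the intervals $(0,1)$ and $(0,T)$ are bounded, H\"older's inequality turns the $L^p$-smallness \eqref{IBC-small} into an $L^1$-smallness
\beq
  \|\ov\rho_0-\rho_0\|_{L^1(0,1)}+\|\ov u-u\|_{L^1(0,T)}\leq C(T)\,\eta.
\eeq

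First I would prove stability of the total mass. Subtracting the two representations \eqref{wt} for $W$ and $\ov W$, bounding the integral of $\rho_0$ between the two (close) upper limits by $\|\rho_0\|_{L^\infty}$ times the distance between them, and combining with the ODE estimate (the analogue of \eqref{ov xi1-xi1}) that controls $\ov\xi_1-\xi_1$ by $\ov W-W$ yields, via a Gronwall argument iterated over subintervals of length $\delta$ as in \eqref{delta} if $T$ is large,
\beq
  \|\ov W-W\|_{C^0([0,T])}\leq C(T,K,\lambda)\bigl(\|\ov\rho_0-\rho_0\|_{L^1(0,1)}+\|\ov u-u\|_{L^1(0,T)}\bigr).
\eeq
From this, $\|\ov\xi_j-\xi_j\|_{C^0}$ for $j=1,\dots,4$, together with $|\ov\alpha-\alpha|$, $|\ov\beta-\beta|$ and the exponential factors $|\ov E-E|$ occurring in \eqref{rho}, are all uniformly small in $(t,x)$. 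Next, to estimate $\|\ov\rho(t,\cdot)-\rho(t,\cdot)\|_{L^p(0,1)}$, I split $[0,1]$ into the common initial-data region $I_t:=[\max\{\xi_2(t),\ov\xi_2(t)\},1]$, the common boundary-data region $B_t:=[0,\min\{\xi_2(t),\ov\xi_2(t)\}]$, and the symmetric-difference set of measure $|\xi_2(t)-\ov\xi_2(t)|$. On the last set the $L^\infty$-bound \eqref{rho-bound} yields a contribution $\leq C|\xi_2(t)-\ov\xi_2(t)|^{1/p}$. On $I_t$, adding and subtracting $\rho_0(\ov\beta)\,\ov E$ and $\rho_0(\ov\beta)\,E$ decomposes $\ov\rho-\rho$ into three pieces; the change of variables $x\mapsto\ov\beta(t,x)$, whose Jacobian is pinched between $e^{-T\|\lambda_x\|_{C^0}}$ and $e^{T\|\lambda_x\|_{C^0}}$, converts the first piece into a multiple of $\|\ov\rho_0-\rho_0\|_{L^p}$, while the third is $\leq\|\rho_0\|_{L^\infty}\|\ov E-E\|_{L^\infty}$. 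A parallel treatment on $B_t$ with $x\mapsto\ov\alpha(t,x)$ reduces matters to $\|\ov u-u\|_{L^p(0,T)}$.

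The main obstacle is the remaining middle pieces $\rho_0\circ\ov\beta-\rho_0\circ\beta$ on $I_t$ and $u\circ\ov\alpha-u\circ\alpha$ on $B_t$: no pointwise Lipschitz bound is available since $\rho_0$ and $u$ are merely $L^\infty$. This is exactly where the hypothesis $p<\infty$ enters, through the standard $L^p$-continuity-of-translations argument: for any $\vep'>0$, approximate $\rho_0\in L^p(0,1)$ by a continuous function $g$ with $\|\rho_0-g\|_{L^p}<\vep'$, and write
\beq
  \rho_0\circ\ov\beta-\rho_0\circ\beta=(\rho_0-g)\circ\ov\beta+(g\circ\ov\beta-g\circ\beta)-(\rho_0-g)\circ\beta.
\eeq
The outer two terms are controlled in $L^p$ by $\vep'$ using the bounded Jacobian of the change of variables, and the middle one is controlled in $L^\infty$ (hence in $L^p$) by the uniform continuity of $g$ on $[0,1]$ combined with $\|\ov\beta-\beta\|_{C^0}\to 0$ from the first step. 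An identical argument handles $u\circ\ov\alpha-u\circ\alpha$. Collecting all pieces and choosing $\eta$ small enough (depending on $\vep$, $p$, $K$, $T$ and $\lambda$) yields \eqref{stab-sol}.
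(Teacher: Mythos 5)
Your proposal is correct and follows essentially the same route as the paper: the explicit characteristic representation, stability of $W$ first on intervals of length $\delta$, the splitting of $[0,1]$ at $\xi_2(t)$ and $\ov\xi_2(t)$, approximation of the $L^\infty$ data by continuous functions to handle the compositions $\rho_0\circ\ov\beta-\rho_0\circ\beta$ and $u\circ\ov\alpha-u\circ\alpha$, the bounded Jacobians of Lemmas \ref{jacobi1}--\ref{jacobi2} for the changes of variables, and the iteration via the uniform a priori bounds to reach $[0,T]$. The only cosmetic differences are that you invoke uniform continuity of $\lambda_x$ directly where the paper uses an approximating sequence $v_n\to\lambda_x$ in $C^0$, and you dispatch the middle strip by its small measure times the $L^\infty$ bound rather than re-running the two main estimates there.
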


\begin{proof}
Solving Cauchy problem \eqref{ov rho-eqn}, we know from
\eqref{wt-bound} and \eqref{K} that
  \be \label{ov w-bound}
   0 \leq \ov W(t) \leq  \|\ov\rho_0\|_{L^1(0,1)}+\|\ov u\|_{L^1(0,T)} \leq K,
   \quad \forall t\in [0,T].
 \ee
Replacing $M$ by $K$ in the definitions of $\ov\lambda(M)$ and
$\|\lambda\|_{C^0}, \|\lambda_x\|_{C^0}, \|\lambda_W\|_{C^0}$ (see
Section 2), we introduce some new notations as $\ov\lambda(K)$ and
(still) $\|\lambda\|_{C^0}, \|\lambda_x\|_{C^0},
\|\lambda_W\|_{C^0}$ in this section.

We first prove the stability of the weak solution for small time.
Let $\delta$ be chosen by \eqref{delta}. For any fixed
$t\in[0,\delta]$, we suppose that $\xi_2(t)<\ov\xi_2(t)$ (the case
that $\xi_2(t) \geq \ov\xi_2(t)$ can be treated similarly). In order
to estimate $\|\ov \rho(t,\cdot) -\rho(t,\cdot)\|_{L^p(0,1)}$ for
$p\in [1,\infty)$, we need to estimate
$\int_0^{\xi_2(t)}|\ov\rho(t,x)-\rho(t,x)|^p dx$,
$\int_{\xi_2(t)}^{\ov \xi_2(t)}|\ov\rho(t,x)-\rho(t,x)|^p dx$ and
$\int_{\ov\xi_2(t)}^1|\ov\rho(t,x)-\rho(t,x)|^p dx$, successively.

For almost every $x\in [0,\xi_2(t)]$, we know from Remark
\ref{rem-rho} that (see Fig \ref{Fig6})
  \begin{align*}
   &\quad |\ov\rho(t,x)-\rho(t,x)|
         \\
   &=\Big|\f{\ov u(\ov\alpha)}{\lambda(0,\ov W(\ov\alpha))}
          \, e^{-\int_{\ov\alpha}^t\lambda_x(\ov\xi_3(\theta),\ov W(\theta))\,d\theta}
      -\f{u(\alpha)}{\lambda(0,W(\alpha))}
          \, e^{-\int_{\alpha}^t\lambda_x(\xi_3(\theta),W(\theta))\,d\theta}\Big|
         \\
   & \leq \f{|\ov u(\ov\alpha)- u(\alpha)| }{\lambda(0,\ov W(\ov\alpha))}
           \, e^{-\int_{\ov\alpha}^t\lambda_x(\ov\xi_3(\theta),\ov W(\theta))\,d\theta}
        +  \Big| \f{u( \alpha)}{\lambda(0,\ov W(\ov\alpha))}
                             -\f{u( \alpha)}{\lambda(0, W(\alpha))}\Big|
            \, e^{-\int_{\ov\alpha}^t\lambda_x(\ov\xi_3(\theta),\ov W(\theta))\,d\theta}
         \\
   & \quad + \f{|u(\alpha)|}{\lambda(0,W(\alpha))}
            \Big| e^{-\int_{\ov\alpha}^t\lambda_x(\ov\xi_3(\theta),\ov W(\theta))\,d\theta}
                  -e^{-\int_{\alpha}^t\lambda_x(\xi_3(\theta),W(\theta))\,d\theta} \Big|
         \\
   &\leq C |\ov u(\ov\alpha)-u(\alpha)| + C |u(\alpha)| |\ov W(\ov\alpha)- W(\alpha)|
         \\
   &\quad  +C |u(\alpha)|  \Big|\int_{\ov\alpha}^t\lambda_x(\ov\xi_3(\theta),\ov W(\theta))\,d\theta
      -\int_{\alpha}^t\lambda_x(\xi_3(\theta),W(\theta))\,d\theta\Big|.
   \end{align*}
Here  and hereafter in this section, we denote by $C$ various
constants which do not depend on $t,x,\rho,\ov\rho$ but may depend
on $p$ and $K$.

For the given $u\in L^{\infty}(0,T)$, let $\{u_n\}_{n=1}^{\infty}
\subset C^1([0,T])$ be such that $u_n \rightarrow u$ in $L^p(0,T)$.
And for the given $\lambda\in C^1([0,1]\times [0,\infty))$, let
$\{v_n\}_{n=1}^{\infty} \subset C^1([0,1] \times [0,K])$ be such
that $v_n \rightarrow \lambda_x$ in $C^0([0,1] \times [0,K])$. Using
sequences $\{u_n\}_{n=1}^{\infty}$, $\{v_n\}_{n=1}^{\infty}$ and
\eqref{w't-bound-loc}, we obtain for almost every $x\in
[0,\xi_2(t)]$ that
  \begin{align}\label{stab-1-1}
   &\quad   |\ov\rho(t,x)-\rho(t,x)|
      \nonumber  \\
   &\leq C |\ov u(\ov\alpha)-u(\ov\alpha)|+C |u_n(\ov\alpha)-u(\ov\alpha)|
        +C |u_n(\alpha)-u(\alpha)|
      \nonumber \\
   &\quad + C |u_n(\ov\alpha)-u_n(\alpha)|
          + C |\ov W(\ov \alpha)-W(\ov \alpha)|
           +C |W(\ov \alpha)-W(\alpha)|
      \nonumber \\
   &\quad  +C |u(\alpha)| \int_{\ov\alpha}^t |v_n(\ov\xi_3(\theta),\ov W(\theta))
            -\lambda_x(\ov\xi_3(\theta),\ov W(\theta))| \,d\theta
      \nonumber \\
   &\quad  +C |u(\alpha)|  \int_{\alpha}^t |v_n(\xi_3(\theta),W(\theta))
           -\lambda_x(\xi_3(\theta),W(\theta))| \,d\theta
      \nonumber \\
   &\quad +C |u(\alpha)|  \Big|\int_{\ov\alpha}^t v_n(\ov\xi_3(\theta),\ov W(\theta))\,d\theta
      -\int_{\alpha}^t v_n(\xi_3(\theta),W(\theta))\,d\theta\Big|
                 \nonumber  \\
   &\leq C |\ov u(\ov\alpha)-u(\ov\alpha)|+C |u_n(\ov\alpha)-u(\ov\alpha)|
        +C |u_n(\alpha)-u(\alpha)|
      \nonumber \\
    &\quad  +C |u(\alpha)|\|v_n -\lambda_x\|_{C^0([0,1]\times [0,K])}
            + C_n |\ov\alpha-\alpha|
            + C_n |u(\alpha)| |\ov\alpha-\alpha|
      \nonumber \\
   &\quad   + C_n |u(\alpha)| \|\ov\xi_3-\xi_3\|_{C^0([\ov\alpha,t])}
      + C_n |u(\alpha)|  \|\ov W -W\|_{C^0([0,\delta])}.
\end{align}
Here  and hereafter in this section, we denote by $C_n$ various
constants which do not depend on $t,x,\rho,\ov\rho$ but may depend
on $p,K$ and $n$ (the index of the corresponding sequences, e.g.
$\{u_n\}_{n=1}^{\infty}$, $\{v_n\}_{n=1}^{\infty}$ and so on).

A similar estimate as \eqref{ov xi1-xi1} gives us
 \be\label{ov xi3-xi3}
   \|\ov\xi_3-\xi_3\|_{C^0([\ov\alpha,t])}
  \leq  \f{t \|\lambda_W\|_{C^0}}{1-t\|\lambda_x\|_{C^0}}
  \cdot \|\ov W -W\|_{C^0([0,\delta])}. \ee
From the fact that
   \beq \ov\xi_3(t)=\int_{\ov\alpha}^t\lambda(\ov\xi_3(\theta),\ov W(\theta))\,d\theta
    =x=\xi_3(t)=\int_{\alpha}^t\lambda (\xi_3(\theta),W(\theta))\,d\theta
  \eeq
and  the definition of $\ov \lambda(K)$, we get also that
  \begin{align} \label{ov alpha-alpha}
    |\ov\alpha-\alpha|
  &\leq \f{1}{\ov\lambda(K)} \int_{\alpha}^{\ov\alpha} \lambda(\xi_3(\theta),W(\theta))\,d\theta
     \nonumber\\
  &=\f{1}{\ov\lambda(K)} \int_{\ov\alpha}^t ( \lambda(\ov\xi_3(\theta),\ov W(\theta))
     -\lambda(\xi_3(\theta),W(\theta)) )\,d\theta
     \nonumber\\
   &\leq \f{1}{\ov\lambda(K)}
     \Big( t\|\lambda_x\|_{C^0} \|\ov\xi_3-\xi_3\|_{C^0([\ov\alpha,t])}
      + t \|\lambda_W\|_{C^0} \|\ov W -W\|_{C^0([0,\delta])}  \Big)
       \nonumber\\
   &\leq \f{t \|\lambda_W\|_{C^0}}{\ov\lambda(K)(1-t\|\lambda_x\|_{C^0})}
          \cdot \|\ov W -W\|_{C^0([0,\delta])}.
  \end{align}
On the other hand, by \eqref{wt} and H\"{o}lder inequality, we have
for every $t\in [0,\delta]$ that
 \begin{align*}
 &\quad |\ov W(t)-W(t)|
   \\
 &=\Big |\int_0^t (\ov u(\sigma)-u(\sigma)) d\sigma
   +\int_0^{1-\int_0^t\lambda(\ov\xi_1(\theta),\ov W(\theta))\,d\theta}\ov
   \rho_0(\sigma)\,d\sigma
       -\int_0^{1-\int_0^t\lambda(\xi_1(\theta),W(\theta))\,d\theta}\rho_0(\sigma)\,d\sigma\Big |
    \\
 &\leq \delta^{\f 1q} \|\ov u-u\|_{L^p(0,T)} +\|\ov\rho_0-\rho_0\|_{L^p(0,1)}
   +\|\rho_0\|_{L^{\infty}}
     \int_0^t |\lambda(\ov\xi_1(\theta),\ov W(\theta))-\lambda(\xi_1(\theta),W(\theta))|\,d\theta
   \\
 &\leq \delta^{\f 1q} \|\ov u-u\|_{L^p(0,T)}+\|\ov\rho_0-\rho_0\|_{L^p(0,1)}
   \\
 &\quad   +\delta  \|\rho_0\|_{L^{\infty}}(\|\lambda_x\|_{C^0} \|\ov\xi_1-\xi_1\|_{C^0([0,t])}
   + \|\lambda_W\|_{C^0} \|\ov W -W\|_{C^0([0,\delta])} ),
 \end{align*}
where $q$ satisfies $\f 1p+ \f 1q=1$.
By the definitions of $\xi_1$ and $\ov\xi_1$, we still have
\eqref{ov xi1-xi1}, thus
  \begin{align*}  &\quad  \|\ov W -W\|_{C^0([0,\delta])}
  = \sup_{t\in [0,\delta]}|\ov W(t)-W(t)|
      \\
  & \leq
  \f{1-\delta \|\lambda_x\|_{C^0}}
  {1-\delta (\|\lambda_x\|_{C^0}+\|\rho_0\|_{L^{\infty}} \|\lambda_W\|_{C^0})}
   \cdot (\delta^{\f 1q} \|\ov  u-u\|_{L^p(0,T)} +\|\ov\rho_0-\rho_0\|_{L^p(0,1)}).
   \end{align*}
and furthermore, from the choice \eqref{delta} of $\delta$,
  \be \label{ov w-w}  \|\ov W -W\|_{C^0([0,\delta])}\leq
  C \|\ov u-u\|_{L^p(0,T)} +C \|\ov\rho_0-\rho_0\|_{L^p(0,1)}.
   \ee
Therefore, combining \eqref{stab-1-1}, \eqref{ov xi3-xi3}, \eqref{ov
alpha-alpha}  and \eqref{ov w-w} all together and using Lemma
\ref{jacobi1}, we obtain easily that
  \begin{align}\label{stab-1-2}
  &\quad  \int_0^{\xi_2(t)}|\ov\rho(t,x)-\rho(t,x)|^p dx
         \nonumber \\
   & \leq C \|u_n-u\|^p_{L^p(0,T)}
          + C\|v_n-\lambda_x\|^p_{C^0([0,1] \times [0,K])}
      \nonumber \\
    &\quad  +C_n \|\ov u-u\|^p_{L^p(0,T)}
              + C_n \|\ov\rho_0-\rho_0\|^p_{L^p(0,1)}.
            \end{align}

For almost every $x\in [\ov\xi_2(t),1]$,   we know from Remark
\ref{rem-rho} that (see Fig \ref{Fig7})
  \begin{align*}
     &\quad |\ov\rho(t,x)-\rho(t,x)|=\Big
          |\ov\rho_0(\ov{\beta})e^{-\int_0^t\lambda_x(\ov\xi_4(\theta),\ov W(\theta))\, d\theta}
            -\rho_0(\beta)  e^{-\int_0^t\lambda_x(\xi_4(\theta),W(\theta))\,  d\theta}\Big |
     \\
    & \leq |\ov\rho_0(\ov{\beta})-\rho_0(\beta)|
       e^{-\int_0^t\lambda_x(\ov\xi_4(\theta),\ov W(\theta))\, d\theta}
     +|\rho_0(\beta)|  \Big| e^{-\int_0^t\lambda_x(\ov\xi_4(\theta),\ov W(\theta))\, d\theta}
        -e^{-\int_0^t\lambda_x(\xi_4(\theta),W(\theta))\, d\theta}\Big |
        \\
  &\leq C |\ov\rho_0(\ov \beta)-\rho_0(\ov\beta)|
     +C|\rho_0(\ov\beta)-\rho_0(\beta)|
       \\
  &\quad  +C |\rho_0(\beta)| \Big| \int_0^t (\lambda_x(\ov\xi_4(\theta),\ov W(\theta))
             -\lambda_x(\xi_4(\theta),W(\theta)))\, d\theta\Big|.
  \end{align*}

For the given $\rho_0\in L^{\infty}(0,1)$, we let
$\{\rho_0^n\}_{n=1}^{\infty} \subset C^1([0,1])$ be such that
$\rho_0^n \rightarrow \rho_0$ in $L^p(0,1)$. With the help of
sequences $\{\rho^n_0\}_{n=1}^{\infty}$, $\{v_n\}_{n=1}^{\infty}$,
we obtain for almost every $x\in [\ov\xi_2(t),1]$ that
 \begin{align} \label{stab-2-1}
 & \quad  |\ov\rho(t,x)-\rho(t,x)|
   \nonumber\\
 &\leq C |\ov\rho_0(\ov \beta)-\rho_0(\ov\beta)|
     +C|\rho_0^n(\ov\beta)-\rho_0(\ov\beta)|
     +C|\rho_0^n(\beta)-\rho_0(\beta)|
     + C|\rho_0^n(\ov\beta)-\rho_0^n(\beta)|
   \nonumber\\
 &\quad  +C |\rho_0(\beta)|  \int_0^t |v_n(\ov\xi_4(\theta),\ov W(\theta))
              -\lambda_x(\ov\xi_4(\theta),\ov W(\theta))|\, d\theta
   \nonumber\\
 & \quad +C |\rho_0(\beta)| \int_0^t |v_n(\xi_4(\theta), W(\theta))
              -\lambda_x(\xi_4(\theta),W(\theta))| \, d\theta
    \nonumber\\
  &\quad +C |\rho_0(\beta)| \int_0^t |v_n(\ov \xi_4(\theta), \ov W(\theta))
              -v_n(\xi_4(\theta),W(\theta))| \, d\theta
     \nonumber\\
 &\leq C |\ov\rho_0(\ov \beta)-\rho_0(\ov\beta)|
     +C|\rho_0^n(\ov\beta)-\rho_0(\ov\beta)|
     +C|\rho_0^n(\beta)-\rho_0(\beta)|
     + C |\rho_0(\beta)| \|v_n-\lambda_x\|_{C^0([0,1]\times[0,K])}
     \nonumber\\
  &\quad   + C_n|\ov\beta -\beta|
     +C_n |\rho_0(\beta)| \|\ov\xi_4-\xi_4\|_{C^0([0,t])}
    +C_n |\rho_0(\beta)| \|\ov W -W\|_{C^0([0,\delta])}.
\end{align}
Similar to \eqref{ov xi1-xi1}, we have
 \be \label{ov xi4-xi4}
 \|\ov\xi_4-\xi_4\|_{C^0([0,t])}
 \leq\f{t\|\lambda_W\|_{C^0}}{1-t \|\lambda_x\|_{C^0}}
 \cdot \|\ov W -W\|_{C^0([0,\delta])},
 \ee
 and in particular,
 \be \label{ov beta-beta}
 |\ov\beta-\beta|= |\ov\xi_4(0)-\xi_4(0)|
 \leq\f{t\|\lambda_W\|_{C^0}}{1-t \|\lambda_x\|_{C^0}}
 \cdot \|\ov W -W\|_{C^0([0,\delta])}.
 \ee
Therefore, by the choice \eqref{delta} of $\delta$, together with
estimates  \eqref{ov w-w}, \eqref{stab-2-1},\eqref{ov xi4-xi4},
\eqref{ov beta-beta} and Lemma \ref{jacobi2}, we get immediately
that
\begin{align}\label{stab-2-2}
 &\quad   \int_{\ov \xi_2(t)}^1 |\ov\rho(t,x)-\rho(t,x)|^p dx
    \nonumber\\
 & \leq C\|\rho_0^n-\rho_0\|^p_{L^p(0,1)}
       +C\|v_n-\lambda_x\|^p_{C^0([0,1]\times [0,K])}
   \nonumber\\
  &\quad      +C_n\|\ov u-u\|^p_{L^p(0,T)}
       +C_n \|\ov\rho_0-\rho_0\|^p_{L^p(0,1)}.
\end{align}

Now it is left to estimate $\int_{\xi_2(t)}^{\ov\xi_2(t)}
|\ov\rho(t,x)-\rho(t,x)|^p dx$ (see Fig \ref{Fig8}).  Obviously, we
have
 \be \label{stab-3-1}
  \int_{\xi_2(t)}^{\ov\xi_2(t)}|\ov\rho(t,x)-\rho(t,x)|^p dx \leq
  \int_{\xi_2(t)}^x |\ov\rho(t,x)-\rho(t,x)|^p dx
  +\int_x^{\ov\xi_2(t)}|\ov\rho(t,x)-\rho(t,x)|^p dx.
  \ee
Then similar to estimates \eqref{stab-1-2} and \eqref{stab-2-2}, we
get easily from \eqref{stab-3-1} that
 \begin{align} \label{stab-3-2}
  & \quad \int_{\xi_2(t)}^{\ov\xi_2(t)}|\ov\rho(t,x)-\rho(t,x)|^p dx
    \nonumber \\
    &\leq C\|\rho_0^n -\rho_0\|^p_{L^p(0,1)}
        + C \|u_n-u\|^p_{L^p(0,T)}
        +C\|v_n-\lambda_x\|^p_{C^0([0,1]\times [0,K])}
     \nonumber\\
    & \quad
         +C_n\|\ov u-u\|^p_{L^p(0,T)}
         +C_n \|\ov\rho_0 -\rho_0\|^p_{L^p(0,1)}.
  \end{align}

Summarizing  estimates \eqref{stab-1-2},\eqref{stab-2-2} and
\eqref{stab-3-2}, finally we prove for any fixed $t\in[0,\delta]$
with $\delta$ satisfying \eqref{delta} that
   \begin{align} \label{stab-sol-2}
    & \quad \|\ov\rho(t,\cdot)-\rho(t,\cdot)\|^p_{L^p(0,1)}
       \nonumber\\
    &\leq C\|\rho_0^n -\rho_0\|^p_{L^p(0,1)}
        + C \|u_n-u\|^p_{L^p(0,T)}
        +C\|v_n-\lambda_x\|^p_{C^0([0,1]\times [0,K])}
        \nonumber\\
    & \quad
         +C_n\|\ov u-u\|^p_{L^p(0,T)}
         +C_n \|\ov\rho_0 -\rho_0\|^p_{L^p(0,1)}.
\end{align}
  Now if we take $n$ large enough and then $\eta$ in \eqref{IBC-small}
small enough, the right-hand side of \eqref{stab-sol-2} can be
smaller than any given constant $\vep>0$.

In order to obtain the global stability \eqref{stab-sol} from
\eqref{stab-sol-2}, it suffices, according to \eqref{delta}, to
prove uniform a priori estimates for
$\|\rho(t,\cdot)\|_{L^1{(0,1)}}$,
$\|\rho(t,\cdot)\|_{L^{\infty}(0,1)}$,
$\|\ov\rho(t,\cdot)\|_{L^1{(0,1)}}$, and
$\|\ov\rho(t,\cdot)\|_{L^{\infty}(0,1)}$. In fact the desired a
priori estimates are available by Remark \ref{rem-rho} (see
estimates \eqref{rho-bound} and \eqref{wt-bound}), hence we finally
reach \eqref{stab-sol}. This finishes the proof of Theorem
\ref{thm-stab-sol}.

\end{proof}

We also have the stability on the out-flux $y$.

\begin{thm} \label{thm-stab-flux}
For any $\vep>0$, $p\in [1,\infty)$ and any $K$ such that \eqref{K}
holds, there exists $\eta=\eta(\vep,p,K))>0$ small enough such that,
if \eqref{IBC-small} holds, then
 \be \label{stab-flux}\|\ov y -y\|_{L^p(0,T)}< \vep. \ee
\end{thm}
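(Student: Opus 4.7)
The plan is to exploit the explicit representation of the out-flux $y(t)=\rho(t,1)\lambda(1,W(t))$ that follows from Remark \ref{rem-rho}. Specializing \eqref{rho} to $x=1$ and multiplying by $\lambda(1,W(t))$, and observing that the characteristic $\xi_4$ (resp.\ $\xi_3$) passing through $(t,1)$ coincides with $\xi_1$, one obtains
\begin{align*}
 y(t) &= \lambda(\xi_1(t),W(t))\,\rho_0\Big(1-\int_0^t\lambda(\xi_1(\theta),W(\theta))\,d\theta\Big),\quad 0\leq t\leq t^*,\\
 y(t) &= u(\xi_1^{-1}(0))\,\f{\lambda(1,W(t))}{\lambda(0,W(\xi_1^{-1}(0)))}\,e^{-\int_{\xi_1^{-1}(0)}^t\lambda_x(\xi_1(\theta),W(\theta))\,d\theta},\quad t^*\leq t\leq T,
\end{align*}
where $t^*:=\xi_2^{-1}(1)$, and analogous formulas hold for $\ov y$ with $\ov t^*:=\ov\xi_2^{-1}(1)$.

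Without loss of generality assume $t^*\leq \ov t^*$ and split $[0,T]$ into $[0,t^*]$, $[t^*,\ov t^*]$, and $[\ov t^*,T]$ (any possibly empty). From the defining identities $\int_0^{t^*}\lambda(\xi_1,W)d\theta=1=\int_0^{\ov t^*}\lambda(\ov\xi_1,\ov W)d\theta$, the uniform lower bound $\ov\lambda(K)>0$, and the estimates \eqref{ov xi1-xi1} and \eqref{ov w-w} already proved, one derives
\beq
 \ov t^*-t^*\leq \f{C}{\ov\lambda(K)}\,\|\ov W-W\|_{C^0([0,T])}\leq C\bigl(\|\ov u-u\|_{L^p(0,T)}+\|\ov\rho_0-\rho_0\|_{L^p(0,1)}\bigr).
\eeq
On each of the two full-formula intervals, I would then bound $|y(t)-\ov y(t)|$ pointwise by exactly the decomposition used in the proof of Theorem \ref{thm-stab-sol}. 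On $[0,t^*]$ the flux has the form $\lambda\cdot\rho_0(\cdot)$, so the difference splits into a $\lambda$-part controlled by $\|\ov W-W\|_{C^0}$ and $\|\ov\xi_1-\xi_1\|_{C^0}$, and a $\rho_0$-part handled by introducing $\rho_0^n\in C^1([0,1])$ with $\rho_0^n\to\rho_0$ in $L^p(0,1)$ and applying the change-of-variable Lemmas \ref{jacobi1}--\ref{jacobi2}. On $[\ov t^*,T]$ the flux involves $u$ and $\lambda_x$ along characteristics, and I would similarly use smooth approximants $u_n\to u$ in $L^p(0,T)$ and $v_n\to\lambda_x$ in $C^0([0,1]\times[0,K])$ to arrive at an inequality of the same shape as \eqref{stab-sol-2}.

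On the transition interval $[t^*,\ov t^*]$ the two closed forms do not match structurally (one still reads from $\rho_0$, the other already from $\ov u$), so the natural route is the crude bound
\beq
 \int_{t^*}^{\ov t^*}|y(t)-\ov y(t)|^p\,dt \leq 2^{p-1}\bigl(\|y\|_{L^{\infty}(0,T)}^p+\|\ov y\|_{L^{\infty}(0,T)}^p\bigr)(\ov t^*-t^*),
\eeq
where the uniform $L^{\infty}$ bounds on $y,\ov y$ follow at once from \eqref{rho-bound}, \eqref{wt-bound} and the continuity of $\lambda$. By the previous step $\ov t^*-t^*$ tends to zero with $\eta$, so this term is negligible. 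Gluing the three intervals and choosing first $n$ large and then $\eta$ small, exactly as in the proof of Theorem \ref{thm-stab-sol}, yields \eqref{stab-flux}.

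The principal obstacle is precisely the transition interval $[t^*,\ov t^*]$, where characteristic-by-characteristic matching between $y$ and $\ov y$ breaks down and one must rely on vanishing measure rather than on a pointwise comparison; outside of that interval the argument is a careful but routine imitation of the proof of Theorem \ref{thm-stab-sol}, with the explicit formulas for $\rho(t,\cdot)$ replaced by those for $\rho(t,1)\lambda(1,W(t))$.
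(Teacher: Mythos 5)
Your proposal is correct in substance but takes a genuinely different route from the paper's. The paper never introduces the global crossing times $t^*=\xi_2^{-1}(1)$ and $\ov t^{\,*}=\ov\xi_2^{-1}(1)$: it proves $\|\ov y-y\|_{L^p(\tau,\tau+\delta)}<\vep$ on short slabs of length $\delta$ satisfying \eqref{delta}, on each of which the backward characteristic from $(t,1)$ lands on the line $s=\tau$ inside $[0,1]$ (because $\delta\|\lambda\|_{C^0}\leq 1$), so that $y$ and $\ov y$ are \emph{always} read from the solutions at time $\tau$; Theorem \ref{thm-stab-sol} then supplies the smallness of $\|\ov\rho(\tau,\cdot)-\rho(\tau,\cdot)\|_{L^p(0,1)}$, which plays the role of \eqref{IBC-small} on each slab, and finitely many slabs are concatenated. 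This locality avoids entirely the transition interval $[t^*,\ov t^{\,*}]$ you must treat separately, and it allows the short-time constants $(1-t\|\lambda_x\|_{C^0})^{-1}$ of \eqref{ov xi1-xi1} and \eqref{ov w-w} to be reused verbatim. Your single global decomposition also works: the vanishing-measure bound on $[t^*,\ov t^{\,*}]$ is sound, since $0\leq \ov t^{\,*}-t^*\leq \ov\lambda(K)^{-1}(\ov\xi_2(\ov t^{\,*})-\ov\xi_2(t^*))=\ov\lambda(K)^{-1}(\xi_2(t^*)-\ov\xi_2(t^*))\leq C\|\ov W-W\|_{C^0([0,T])}$ and $y,\ov y$ are uniformly bounded by \eqref{rho-bound}, \eqref{wt-bound} and \eqref{K}. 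But two points need to be made explicit. First, \eqref{ov xi1-xi1} and \eqref{ov w-w} are established only on $[0,\delta]$; on the long intervals $[0,t^*]$ and $[\ov t^{\,*},T]$ you need their Gronwall versions (constants of the form $e^{T\|\lambda_x\|_{C^0}}$) together with the global bound $\|\ov W-W\|_{C^0([0,T])}\leq \sup_{t}\|\ov\rho(t,\cdot)-\rho(t,\cdot)\|_{L^1(0,1)}$, which is itself Theorem \ref{thm-stab-sol} with $p=1$ --- so your route also rests on Theorem \ref{thm-stab-sol}, only less visibly. Second, your displayed formula for $y$ on $[0,t^*]$ omits the factor $e^{-\int_0^t\lambda_x(\xi_1(\theta),W(\theta))\,d\theta}$ that comes from \eqref{rho} at $x=1$; this is harmless for the structure of the estimates (your subsequent decomposition treats the exponential correctly, as in \eqref{stab-sol-2}) but should be corrected. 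With these adjustments both arguments deliver \eqref{stab-flux}; the paper's is shorter because locality makes every slab look like the first one, while yours gives a more explicit global picture of where the out-flux switches from being driven by $\rho_0$ to being driven by $u$.
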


\begin{proof} First, we prove $\|\ov y -y\|_{L^p(0,\delta)}<
\vep$ for $\delta$ small. Let $\delta$ be such that \eqref{delta}
holds. For any fixed $t\in [0,\delta]$, without loss of generality,
we suppose that $\xi_2(t)<\ov\xi_2(t)$ (the case that $\xi_2(t)\geq
\ov\xi_2(t)$ can be treated similarly).
By Remark \ref{rem-rho}, we have for almost every $t\in [0,\delta]$
that (see Fig \ref{Fig9})
  \begin{align*}
  &\quad |\ov y(t)-y(t)|
      \nonumber\\
  &=|\ov\rho(t,1)\lambda(1,\ov W(t))-\rho(t,1)\lambda(1,W(t))|
      \nonumber\\
  &\leq C |\ov\rho(t,1)-\rho(t,1)| + C|\rho(t,1)|\|\ov W -W\|_{C^0([0,\delta])}
       \nonumber\\
  &=C \Big|\ov \rho_0(\ov \xi_1(0))
             e^{-\int_0^t \lambda_x(\ov \xi_1(\theta),\ov W(\theta)) d\theta}
       - \rho_0(\xi_1(0))
             e^{-\int_0^t \lambda_x(\xi_1(\theta),W(\theta)) d\theta} \Big|
      \nonumber\\
  &\quad   +C |\rho(t,1)| \|\ov W -W\|_{C^0([0,\delta])}
  \nonumber\\
  &\leq C |\ov \rho_0(\ov \xi_1(0))-\rho_0(\xi_1(0))|
        + C |\rho(t,1)| \|\ov W -W\|_{C^0([0,\delta])}
     \nonumber\\
  &\quad + C |\rho_0(\xi_1(0))|\Big| \int_0^t ( \lambda_x(\ov \xi_1(\theta),\ov W(\theta))
              -\lambda_x(\xi_1(\theta),W(\theta)) )  d\theta \Big|.
  \end{align*}
Then using  sequences $\{\rho_0^n\}_{n=1}^{\infty}$,
$\{v_n\}_{n=1}^{\infty}$, we obtain
  \begin{align*}
  &\quad |\ov y(t)-y(t)|
             \nonumber\\
  &\leq C |\ov \rho_0(\ov \xi_1(0))-\rho_0(\ov \xi_1(0))|
        + C |\rho_0^n(\ov \xi_1(0))-\rho_0(\ov \xi_1(0))|
        + C |\rho_0^n(\xi_1(0))-\rho_0(\xi_1(0))|
              \nonumber\\
  & \quad   +C |\rho_0^n(\ov \xi_1(0))-\rho_0^n(\xi_1(0))|
           + C |\rho(t,1)| \|\ov W -W\|_{C^0([0,\delta])}
              \nonumber\\
  & \quad    + C |\rho_0(\xi_1(0))|  \int_0^t |v_n(\ov \xi_1(\theta),\ov W(\theta))
              -v_n( \xi_1(\theta), W(\theta)) | d\theta
       \nonumber\\
   &\quad  + C |\rho_0(\xi_1(0))|  \int_0^t |v_n(\ov \xi_1(\theta),\ov W(\theta))
              -\lambda_x(\ov \xi_1(\theta),\ov W(\theta)) | d\theta
       \nonumber\\
   &\quad  +C |\rho_0(\xi_1(0))| \int_0^t |v_n(\xi_1(\theta),W(\theta))
              -\lambda_x(\xi_1(\theta),W(\theta)) | d\theta,
       \nonumber\\
   &\leq C |\ov \rho_0(\ov \xi_1(0))-\rho_0(\ov \xi_1(0))|
        + C |\rho_0^n(\ov \xi_1(0))-\rho_0(\ov \xi_1(0))|
      \nonumber\\
    &\quad     + C |\rho_0^n(\xi_1(0))-\rho_0(\xi_1(0))|
           +C |\rho_0(\xi_1(0))| \|v_n-\lambda_x\|_{C^0([0,1]\times [0,K])}
              \nonumber\\
   &  \quad  + C_n  |\rho_0(\xi_1(0))| \|\ov \xi_1-\xi_1\|_{C^0([0,t])}
            + C_n |\rho_0(\xi_1(0))| \|\ov W -W\|_{C^0([0,\delta])},
\end{align*}
which results in
  \begin{align*}
     &\quad \int_0^{\delta}|\ov y(t)-y(t)|^p \,dt
          \\
     &\leq C\|\rho_0^n -\rho_0\|^p_{L^p(0,1)}
         +C\|v_n-\lambda_x\|^p_{C^0([0,1]\times [0,K])}
         + C_n  \|\ov \rho_0-\rho_0\|^p_{L^p(0,1)}
        + C_n \|\ov u-u\|^p_{L^p(0,T)}
        \end{align*}
with the help of \eqref{ov xi1-xi1} and \eqref{ov w-w}.
Obviously, $\|\ov y-y\|_{L^p(0,\delta)}<\vep$ holds if we let $n$
large enough and then $\eta>0$ in \eqref{IBC-small} small enough.

In a same way, we can prove for every $\tau\in [0,T-\delta]$ by
applying Theorem \ref{thm-stab-sol} that
  \beq    \|\ov y-y\|_{L^p(\tau,\tau+\delta)}<\vep
  \eeq
if $\delta$ satisfies \eqref{delta} (independent of $\tau$). Step by
step, we finally prove \eqref{stab-flux}.

\end{proof}

\section{$L^p$-optimal control for demand tracking problem}

Let $T>0$ and $\rho_0\in L^{\infty}_+(0,1)$ be given. According to
Theorem \ref{thm-sol}, for every  $u\in L^{\infty}_{+}(0,T)$, Cauchy
problem \eqref{eq},\eqref{eq-IC} and \eqref{eq-BC} admits a unique
weak solution $L^{\infty}((0,T)\times (0,1)) \cap \rho\in
C^0([0,T],L^p(0,1)) \cap C^0([0,1],L^p(0,T))$ for all $p\in
[1,\infty)$.

For any fixed given demand $y_d\in L^{\infty}(0,T)$, initial data
$\rho_0\in L^{\infty}_+(0,1)$ and $p\in [1,\infty)$, we define a
cost functional on $L^{\infty}_{+}(0,T)$ by
 \beq
 J_p(u):=\|u\|^2_{L^{\infty}(0,T)} + \|y-y_d\|^2_{L^p(0,T)},
 \quad u\in L^{\infty}_{+}(0,T),
 \eeq
where $y(t):=\rho(t,1)\lambda(1,W(t))$ is the out-flux corresponding
to the in-flux $u\in L^{\infty}_{+}(0,T)$ and initial data $\rho_0$.
This cost functional is motivated by \cite{CKWang, LaMarca} and the
existence of the solution to this optimal control problem is
obtained by the following theorem.

\begin{thm}\label{thm-opt}
The infimum of the functional $J_p$ in $L^{\infty}_{+}(0,T)$ with
$p\in [1,\infty)$ is achieved, i.e., there exists $u^{\infty}\in
L^{\infty}_{+}(0,T)$ such that
 \beq
 J_p(u^{\infty})=\inf_{u\in L^{\infty}_{+}(0,T)}J_p(u).
 \eeq
\end{thm}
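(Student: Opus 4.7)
The plan is to apply the direct method of the calculus of variations. Let $\{u_n\}_{n\geq 1}\subset L^{\infty}_+(0,T)$ be a minimizing sequence, $J_p(u_n)\to \inf J_p$. The bound $\|u_n\|_{L^\infty(0,T)}^2\leq J_p(u_n)$ is uniform, so by Banach--Alaoglu, after extraction, $u_n\rightharpoonup^* u^\infty$ in $L^\infty(0,T)$; since non-negativity is preserved under weak-$*$ limits, $u^\infty\in L^{\infty}_+(0,T)$. The goal is to show that $u^\infty$ realizes the infimum by passing to the limit in $J_p(u_n)$.

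Denote by $\rho_n$ the weak solution of \eqref{eq}, \eqref{eq-IC}, \eqref{eq-BC} with boundary datum $u_n$, set $W_n(t)=\int_0^1 \rho_n(t,x)\,dx$ and $y_n(t)=\rho_n(t,1)\lambda(1,W_n(t))$. The uniform bound on $\|u_n\|_{L^\infty}$ together with the a priori estimates \eqref{rho-bound}, \eqref{wt-bound} and \eqref{w't-bound} of Remark~\ref{rem-rho} show that $\{\rho_n\}$ is bounded in $L^\infty((0,T)\times(0,1))$ and $\{W_n\}$ is bounded in $W^{1,\infty}(0,T)$, uniformly in $n$. By Arzel\`a--Ascoli applied to the equi-Lipschitz family $\{W_n\}$, a subsequence converges $W_n\to W^*$ uniformly on $[0,T]$, and a further extraction yields $W_n'\rightharpoonup^* (W^*)'$ in $L^\infty(0,T)$.

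The crucial step is to identify $W^*$ as the total mass $W^\infty$ of the solution $\rho^\infty$ associated with $u^\infty$. Starting from the fixed-point equation \eqref{wt} written for $(u_n,W_n,\xi_1^n)$, the weak-$*$ convergence of $u_n$ gives $\int_0^t u_n(\alpha)d\alpha\to \int_0^t u^\infty(\alpha)d\alpha$ pointwise in $t$; continuous dependence of the ODE \eqref{xi1} on its parameter combined with $W_n\to W^*$ uniformly yields $\xi_1^n\to \xi_1^*$ uniformly, whence $\int_0^t\lambda(\xi_1^n,W_n)d\theta\to \int_0^t\lambda(\xi_1^*,W^*)d\theta$; and since $\rho_0\in L^\infty$, $a\mapsto \int_0^a\rho_0(\beta)d\beta$ is Lipschitz, so the outer integral in \eqref{wt} also passes to the limit. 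Thus $W^*$ satisfies the same fixed-point equation as $W^\infty$ on a small interval, and the argument is iterated (as in the last step of the proof of Theorem~\ref{thm-sol}) to cover $[0,T]$; by the uniqueness assertion of Theorem~\ref{thm-sol}, $W^*=W^\infty$.

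Integrating \eqref{eq} in $x$ yields the mass balance $W_n'(t)=u_n(t)-y_n(t)$, hence $y_n=u_n-W_n'\rightharpoonup^* u^\infty-(W^\infty)'=y^\infty$ weakly-$*$ in $L^\infty(0,T)$, and in particular weakly in $L^p(0,T)$. The $L^\infty$-norm is weakly-$*$ lower semicontinuous and the $L^p$-norm ($1\leq p<\infty$) is weakly lower semicontinuous, so
\[
  J_p(u^\infty)=\|u^\infty\|_{L^\infty}^2+\|y^\infty-y_d\|_{L^p}^2
  \leq \liminf_{n\to\infty}J_p(u_n)
  = \inf_{u\in L^{\infty}_+(0,T)} J_p(u),
\]
and $u^\infty$ achieves the infimum. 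The delicate point is the identification $W^*=W^\infty$: only the weak-$*$ convergence of $u_n$ is available, but the uniform Lipschitz bound \eqref{w't-bound} upgrades this to \emph{strong} uniform convergence of the total mass $W_n$, which is exactly what is needed to pass to the limit in the nonlocal velocity $\lambda(\cdot,W_n)$.
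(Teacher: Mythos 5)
Your proposal is correct in outline and coincides with the paper's proof up to and including the identification of the limit total mass: the extraction of a weak-$*$ convergent subsequence of the minimizing sequence, the uniform $W^{1,\infty}$ bound on $W^n$ from \eqref{wt-bound} and \eqref{w't-bound}, the Arzel\`a--Ascoli compactness, the passage to the limit in the fixed-point equation \eqref{wt} and the iteration in time using uniqueness of the fixed point are all exactly the paper's steps. Where you genuinely diverge is in proving $y^n\rightharpoonup y^{\infty}$ in $L^p(0,T)$: the paper works directly with the characteristic representation \eqref{rho} of $\rho(t,1)$, splits $[0,T]$ into three cases according to the position of $\xi_2^{\infty}(T)$ relative to $1$, approximates $\rho_0$ by $C^1$ functions, and in the third case performs the change of variables $t\mapsto\alpha^n$ along characteristics to convert the weak-$*$ convergence of $u^n$ into convergence of the out-flux integrals. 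You replace all of this by the single macroscopic balance $W'(t)=u(t)-y(t)$, which turns $y^n=u^n-(W^n)'$ into a sum of a weak-$*$ convergent sequence and a sequence of derivatives of uniformly convergent Lipschitz functions; this is shorter and avoids the case analysis entirely. The one point you should not leave as a formal ``integrate the PDE in $x$'': the definition of weak solution does not admit test functions that are nonzero at $x=1$, so the identity $W'=u-y$ must be read off from the explicit formulas, namely from Remark~\ref{rem-rho} together with the hidden-regularity trace $\rho(\cdot,1)$ of Remark~\ref{rem-hid-reg}. It does hold (on $[\xi_2^{-1}(1),T]$ the displayed formula for $W'$ is literally $u-y$; on $[0,\xi_2^{-1}(1)]$ one must carry the Jacobian factor $e^{-\int_0^t\lambda_x(\xi_1(\theta),W(\theta))\,d\theta}$ coming from the $t$-dependence of the foot $\xi_1(0;t)$, a factor the paper's displayed formula omits), so this is a gap in justification rather than in substance. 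What the paper's longer route buys in exchange is a representation of $y^n$ that is also used to conclude, a posteriori, that the convergence $y^n\rightarrow y^{\infty}$ is in fact strong in $L^p$; your argument delivers only the weak convergence, which is all that lower semicontinuity requires.
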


\begin{proof}

Let $\{u^n\}_{n=1}^{\infty}\subset L^{\infty}_{+}(0,T)$ be a
minimizing sequence of the functional $J_p$, i.e.
 \beq
 \lim_{n\rightarrow \infty}J_p(u^n)=\inf_{u\in L^{\infty}_{+}(0,T)}J_p(u).
 \eeq
Then we have
 \be\label{un-bound}
 \|u^n\|_{L^{\infty}(0,T)}+\|y^n\|_{L^p(0,T)}\leq C, \quad \forall n\in
 \mathbb{Z}^+.
 \ee
Here and hereafter in this section, we denote by $C$ various
constants which do not depend on $n$ (the index of the sequences,
e.g. $\{u^n\}_{n=1}^{\infty}$, $\{y^n\}_{n=1}^{\infty}$ and so on).
The boundedness of $u^n\subset L^{\infty}_+(0,T)$ implies that there
exists $u^{\infty}\in L^{\infty}_{+}(0,T)$ and a subsequence of
$\{u^{n_{k}}\}_{k=1}^{\infty}$ such that $
u^{n_{k}}\xrightharpoonup[]{*} u^{\infty}$ in $L^{\infty}_{+}(0,T)$.
For simplicity, we still denote the subsequence as
$\{u^n\}_{n=1}^{\infty}$. And we let
 \be \label{wt-M}
 \wt M:=\|\rho_0\|_{L^1(0,1)}
 +\max\{\sup_{n\in \mathbb{Z}^+}\|u^n\|_{L^1(0,T)},\|u^{\infty}\|_{L^1(0,T)}\}
 <\infty. \ee

 Let $\rho^n$ be the weak solution to the Cauchy problem
 \beq
 \begin{cases}
 \rho^n_t(t,x)+(\rho^n(t,x)\lambda(x,W^n(t)))_x=0, \quad & t\geq 0, 0\leq x\leq 1,\\
  \rho^n(0,x)=\rho_0(x),\quad &0\leq x\leq 1,\\
  \rho^n(t,0)\lambda(0,W^n(t))=u^n(t), \quad &0\leq t\leq T,
 \end{cases}
 \eeq
where $ W^n(t):=\int_0^1\rho^n(t,x)dx$. Then for any fixed $t\in
[0,T]$,  we define $\xi_1^n$ by
 \beq
 \f{d\xi_1^n}{ds} =\lambda(\xi_1^n(s),W^n(s)),\quad
      \xi_1^n(t)=1 \ \text{for any fixed }\ t\in [0,T],
\eeq
and define $\xi_2^n$ by
 \beq  \f{d\xi_2^n}{ds} =\lambda(\xi_2^n(s),W^n(s)),\quad
      \xi_2^n(0)=0. \eeq
Thanks to \eqref{un-bound}, we know from \eqref{wt} and
\eqref{w't-bound} that
 \be\label{wn-bound}
 \|W^n\|_{W^{1,\infty}(0,T)}\leq C, \quad \forall n\in \mathbb{Z}^+,
 \ee
Then it follows from Arzel\`{a}-Ascoli Theorem that there exists
$\overline W^{\infty}\in C^0([0,T])$ and a subsequence
$\{W^{n_{l}}\}_{l=1}^{\infty}$ such that $ W^{n_{l}} \rightarrow
\overline W^{\infty}$ in $ C^0([0,T])$. Now we choose the
corresponding subsequence $\{u^{n_{l}}\}_{l=1}^{\infty}$ and again,
denote it as $\{u^n\}_{n=1}^{\infty}$. Thus we have $
u^n\xrightharpoonup[]{*} u^{\infty}$ in $L^{\infty}_+(0,T)$ and $W^n
\rightarrow \overline W^{\infty}$ in $C^0([0,T])$.

In view of \eqref{wt}, \eqref{wt-M} and \eqref{wn-bound}, there
exists a small $\delta>0$ depending only on $\wt M$  and independent
of $n$, such that
  \be \label{wn} W^n(t)=\int_0^t u^n(\alpha)d\alpha
   +\int_0^{1-\int_0^t\lambda(\xi_1^n(\theta),W^n(\theta))d\theta} \rho_0(\beta)d\beta,
   \quad  \forall t \in [0,\delta].
 \ee

For any fixed $t\in [0,T]$,  we define $\ov\xi_1^{\infty}$ by
 \beq
  \f{d\ov\xi_1^{\infty}}{ds}
     =\lambda(\ov\xi_1^{\infty}(s),\ov W^{\infty}(s)),
      \quad \ov\xi_1^{\infty}(t)=1 \ \text{for any fixed }\ t\in [0,T],
 \eeq
and define $\ov\xi_2^{\infty}$ by
  \beq \f{d\ov\xi_2^{\infty}}{ds}
     =\lambda(\ov\xi_2^{\infty}(s),\ov W^{\infty}(s)),
      \quad \ov\xi_2^{\infty}(0)=0.
 \eeq
 Obviously, $W^n \rightarrow \overline W^{\infty}$ in $C^0([0,T])$
implies that for any $t\in [0,\delta]$, $\xi_1^n\rightarrow \ov
\xi_1^{\infty}$ in $C^1([0,t])$ and $\xi_2^n\rightarrow \ov
\xi_2^{\infty}$ in $C^1([0,T])$.
Thus by passing to the limit $n\rightarrow \infty$ in \eqref{wn}, we
obtain
  \be \label{ov-wInf}\ov W^{\infty}(t)=\int_0^t u^{\infty}(\alpha)d\alpha
   +\int_0^{1-\int_0^t\lambda(\ov \xi_1^{\infty}(\theta),\ov W^{\infty}(\theta))d\theta}
    \rho_0(\beta)d\beta,
   \quad  \forall t \in [0,\delta].
 \ee

Let $\rho^{\infty}$ be the weak solution to the Cauchy problem
 \beq
 \begin{cases}
 \rho^{\infty}_t(t,x)+(\rho^{\infty}(t,x)\lambda(x,W^{\infty}(t)))_x=0,
   \quad & t\geq 0, 0\leq x\leq 1,\\
 \rho^{\infty}(0,x)=\rho_0(x),\quad &0\leq x\leq 1,\\
  \rho^{\infty}(t,0)\lambda(0,W^{\infty}(t))=u^{\infty}(t), \quad &0\leq t\leq T,
 \end{cases}
 \eeq
where $W^{\infty}(t):=\int_0^1\rho^{\infty}(t,x)dx.$ For any fixed
$t\in [0,T]$, we define $\xi_1^{\infty}$ by
 \beq
  \f{d\xi_1^{\infty}}{ds}
     =\lambda(\xi_1^{\infty}(s),W^{\infty}(s)),
      \quad \xi_1^{\infty}(t)=1
 \eeq
 and define $\xi_2^{\infty}$ by
 \beq   \f{d \xi_2^{\infty}}{ds}
     =\lambda(\xi_2^{\infty}(s), W^{\infty}(s)),
      \quad \xi_2^{\infty}(0)=0.
      \eeq
Let  $\delta>0$ be small enough so that
  \be \label{wInf} W^{\infty}(t)=\int_0^t u^{\infty}(\alpha)d\alpha
   +\int_0^{1-\int_0^t\lambda(\xi_1^{\infty}(\theta),W^{\infty}(\theta))d\theta}
     \rho_0(\beta)d\beta,
   \quad  \forall t \in [0,\delta].
 \ee

We know from the  proof of Theorem \ref{thm-sol} that $W=F(W)$ has a
unique fixed point in $\Omega_{\delta,\wt M}$ (replacing $M$ by $\wt
M$ in \eqref{Omega}). This implies from \eqref{ov-wInf} and
\eqref{wInf} that $W^{\infty}(t)\equiv \ov W^{\infty}(t)$ on $
[0,\delta]$, hence for any fixed $t\in [0,\delta]$,
$\xi_1^{\infty}(s)\equiv \ov \xi_1^{\infty}(s),
\xi_2^{\infty}(s)\equiv \ov \xi_2^{\infty}(s)$ on $ [0,t]$.
Moreover, with the help of \eqref{delta}, there exists $\delta_0>0$
independent of $\tau\in (0,T)$ such that if $W^{\infty}(\tau)=\ov
W^{\infty}(\tau)$ then $W^{\infty}(t)\equiv\ov W^{\infty}(t)$ on
$[\tau,\tau+\delta_0]\cap[\tau,T]$. Hence we have $W^n \rightarrow
W^{\infty}$ in $C^0([0,T])$ and furthermore $\xi_1^n \rightarrow
\xi_1^{\infty}$ uniformly and $\xi_2^n \rightarrow \xi_2^{\infty}$
in $C^1([0,T])$.

Next we prove that $y^n(t)=\rho^n(t,1)\lambda(1,W^n(t))$ converges
to $y^{\infty}(t)=\rho^{\infty}(t,1)\lambda(1,W^{\infty}(t))$ weakly
in $L^p(0,T)$ for $p\in [1,\infty)$. By \eqref{un-bound},
$\{y^n\}_{n=1}^{\infty}$ is bounded in $L^p(0,T)$. Hence, it
suffices to prove that for any $g\in C^1([0,T])$,
 \be\label{yn-yInf}
 \lim_{n\rightarrow \infty}\int_0^T(y^n(t)-y^{\infty}(t))g(t)dt=0.
 \ee

\emph{Case 1. \ $\xi_2^{\infty}(T)<1$.}
 \begin{align*}
 &\quad \Big|\int_0^T(y^n(t)-y^{\infty}(t))g(t)dt\Big|
     \nonumber \\
  & =\Big | \int_0^T  \Big(\rho_0(\xi_1^n(0)))\lambda(1,W^n(t))
       \, e^{- \int_0^t \lambda_x(\xi_1^n(\theta),W^n(\theta)) d\theta}
       \nonumber \\
  &\qquad \qquad \qquad  -\rho_0(\xi_1^{\infty}(0))\lambda(1,W^{\infty}(t))
       \, e^{- \int_0^t \lambda_x(\xi_1^{\infty}(\theta),W^{\infty}(\theta))
       d\theta}\Big) g(t) dt \Big|.
  \end{align*}
Let $\{\rho_0^k\}_{k=1}^{\infty} \subset C^1([0,1])$ be such that
$\rho_0^k \rightarrow \rho_0$ in $L^p(0,1)$, then by \eqref{rho},
\eqref{wn-bound} and Lemma \ref{jacobi2}, we have
 \begin{align}\label{yn-yInf-1}
 &\quad \Big|\int_0^T(y^n(t)-y^{\infty}(t))g(t)dt\Big|
     \nonumber \\
  & \leq \int_0^T  |\rho_0^k(\xi_1^n(0))-\rho_0(\xi_1^n(0))| \lambda(1,W^n(t))
       \, e^{- \int_0^t \lambda_x(\xi_1^n(\theta),W^n(\theta))
       d\theta} |g(t)| dt
      \nonumber \\
  & \quad    +\int_0^T  |\rho_0^k(\xi_1^{\infty}(0))-\rho_0(\xi_1^{\infty}(0))| \lambda(1,W^{\infty}(t))
       \, e^{- \int_0^t \lambda_x(\xi_1^{\infty}(\theta),W^{\infty}(\theta))
       d\theta} |g(t)| dt
      \nonumber \\
   &  \quad   +\int_0^T  \Big|\rho_0^k(\xi_1^n(0))\lambda(1,W^n(t))
       \, e^{- \int_0^t \lambda_x(\xi_1^n(\theta),W^n(\theta))
       d\theta}
       \nonumber \\
   & \qquad \qquad \qquad
         -\rho_0^k(\xi_1^{\infty}(0)) \lambda(1,W^{\infty}(t))
       \, e^{- \int_0^t \lambda_x(\xi_1^{\infty}(\theta),W^{\infty}(\theta))
       d\theta}\Big| |g(t)| dt
       \nonumber \\
  & \leq C \|\rho_0^k-\rho_0\|_{L^p(0,1)} +C_k \|W^n-W^{\infty}\|_{C^0([0,T])}
           + C_k \int_0^T |\xi_1^n(0)-\xi_1^{\infty}(0)| |g(t)|dt
        \nonumber \\
  &\quad   +C_k \int_0^T  \Big| \int_0^t (\lambda_x(\xi_1^n(\theta),W^n(\theta))
           -\lambda_x(\xi_1^{\infty}(\theta),W^{\infty}(\theta))) d \theta \Big| |g(t)|dt
  \end{align}
where $C_k$ is a constant depending on $\rho_0^k$. Noting the fact
that $\rho_0^k \rightarrow \rho_0$ in $L^p(0,1)$ and
$\xi_1^n\rightarrow \xi_1^{\infty}$, $W^n\rightarrow W^{\infty}$
uniformly, by Lebesgue dominated convergence theorem, we let $k$
large enough first and then $n$ large enough, so that the right hand
side of \eqref{yn-yInf-1} can be arbitrarily small. This concludes
\eqref{yn-yInf} for the case of $\xi_2^{\infty}(T)<1$.

\emph{Case 2. \ $\xi_2^{\infty}(T)=1$, i.e.,
$T=(\xi_2^{\infty})^{-1}(1)$.} For every $\tau\in [0,T)$, we have
 \begin{align}\label{yn-yInf-2}
  & \quad \Big|\int_0^T  (y^n(t)-y^{\infty}(t))g(t)dt\Big|
  \nonumber\\
 & =  \Big|\int_0^{\tau}(y^n(t)-y^{\infty}(t))g(t)dt
     +\int_{\tau}^T (y^n(t)-y^{\infty}(t))g(t)dt\Big|
  \nonumber\\
 & \leq  \Big|\int_0^{\tau}(y^n(t)-y^{\infty}(t))g(t)dt\Big|
    + C (T-\tau)^{\frac 12}.
 \end{align}
Since it is known from Case 1 that for every $\tau\in [0,T)$,
$|\int_0^{\tau}(y^n(t) -y^{\infty}(t)) g(t) dt | \rightarrow 0$ as
$n \rightarrow\infty$, one has  \eqref{yn-yInf} for
$T=(\xi_2^{\infty})^{-1}(1)$ by letting $\tau \rightarrow T$ in
\eqref{yn-yInf-2} .

\emph{Case 3. \ $\xi_2^{\infty}(T)>1$.} Now we have
 \beq
 \int_0^T(y^n(t)-y^{\infty}(t))g(t)dt
 =\Big(\int_0^{(\xi_2^{\infty})^{-1}(1)}
 +\int_{(\xi_2^{\infty})^{-1}(1)}^T\Big) (y^n(t)-y^{\infty}(t))g(t) dt.
 \eeq
Using the result in Case 2, we need only to prove that
$|\int_{(\xi_2^{\infty})^{-1}(1)}^T (y^n(t)-y^{\infty}(t))g(t) dt|
\rightarrow 0.$

For any fixed $\alpha\in [0,T]$, we define
$\wt\xi_1^n,\wt\xi_1^{\infty}$ by
 \begin{align*}
  & \f{d\wt\xi_1^n}{ds}=\lambda(\wt\xi_1^n(s),W^n(s)),\quad
     \wt\xi_1^n(\alpha)=0,
  \\ & \f{d\wt\xi_1^{\infty}}{ds}=\lambda(\wt\xi_1^{\infty}(s),W^{\infty}(s)),\quad
     \wt\xi_1^{\infty}(\alpha)=0,
 \end{align*}
And we know from $W^n \rightarrow W^{\infty}$ in $C^0([0,T])$ that
$\wt\xi_1^n \rightarrow \wt\xi_1^{\infty}$ and $(\wt\xi_1^n)^{-1}
\rightarrow (\wt\xi_1^{\infty})^{-1}$ uniformly.

Then we get from \eqref{rho} that
 \begin{align*}
  &\quad  \Big|\int_{(\xi_2^{\infty})^{-1}(1)}^T (y^n(t)-y^{\infty}(t))g(t)dt\Big|
    \nonumber\\
 &= \Big| \int_{(\xi_2^{\infty})^{-1}(1)}^T
      (\rho^n(t,1)\lambda(1,W^n(t))-\rho^{\infty}(t,1)\lambda(1,W^{\infty}(t)))  g(t)dt\Big|
  \nonumber\\
 &= \Big| \int_{(\xi_2^{\infty})^{-1}(1)}^T
      \Big( \f{u^n(\alpha^n)\lambda(1,W^n(t))}{\lambda(0,W^n(\alpha^n))}
            \, e^{-\int_{\alpha^{^n}}^t
            \lambda_x(\xi_1^n(\theta),W^n(\theta)) d\theta}
    \nonumber\\
  & \qquad \qquad \qquad
      -  \f{u^{\infty}(\alpha^{\infty})\lambda(1,W^{\infty}(t))}{\lambda(0,W^{\infty}(\alpha^{\infty}))}
            \, e^{-\int_{\alpha^{^\infty}}^t   \lambda_x(\xi_1^{\infty}(\theta),W^{\infty}(\theta)) d\theta}
            \Big)  g(t)dt\Big|,
\end{align*}
where $\alpha^n=(\xi_1^n)^{-1}(0), \alpha^{\infty}=
(\xi_1^{\infty})^{-1}(0)$ are determined by
  \be \label{alpha n}
    1-\int_{\alpha^n}^t \lambda(\xi_1^n(\theta),W^n(\theta)) d\theta
    =  1-\int_{\alpha^{\infty}}^t
     \lambda(\xi_1^{\infty}(\theta),W^{\infty}(\theta))
    d\theta  =0,   \ee
and thus
   \begin{align}  \label{d-alpn-dt}
     &\f{d\alpha^n}{dt}= \f{\lambda(1,W^n(t))}{\lambda(0,W^n(\alpha^n))}
          \,e^{-\int_{\alpha^n}^t\lambda_x(\xi_1^n(\theta),W^n(\theta)) d\theta},
      \\\label{d-alph-Inf-dt}
     & \f{d\alpha^{\infty}}{dt} =\f{\lambda(1,W^{\infty}(t))}{\lambda(0,W^{\infty}(\alpha^{\infty}))}
         \,e^{-\int_{\alpha^{\infty}}^t\lambda_x(\xi_1^{\infty}(\theta),W^{\infty}(\theta)) d\theta}.
      \end{align}

Let  $(\tau^n,\tau^{\infty})$ be the value of
$(\alpha^n,\alpha^{\infty})$ when $t=T$ in \eqref{alpha n}, and let
$(\eta^n,\eta^{\infty})$ be the value of
$(\alpha^n,\alpha^{\infty})$ when $t=(\xi_2^{\infty})^{-1}(1)$ in
\eqref{alpha n}. Since $W^n\rightarrow W^{\infty}$, $\xi_1^n
\rightarrow \xi_1^{\infty}$ uniformly, then $\tau^n\rightarrow
\tau^{\infty}$, $\eta^n\rightarrow \eta^{\infty}$ and
$\alpha^n\rightarrow \alpha^{\infty}$ uniformly. Hence, by
\eqref{d-alpn-dt}-\eqref{d-alph-Inf-dt} and using the facts that
$u^n \xrightharpoonup[]{*} u^{\infty}$ and $\wt\xi_1^n \rightarrow
\wt\xi_1^{\infty}, (\wt\xi_1^n)^{-1} \rightarrow
(\wt\xi_1^{\infty})^{-1}$ uniformly, we  obtain
\begin{align*}
  &\quad  \Big|\int_{(\xi_2^{\infty})^{-1}(1)}^T (y^n(t)-y^{\infty}(t))g(t)dt\Big|
    \nonumber\\
  &= \Big| \int_{\eta^n}^{\tau^n}   u^n(\alpha^n) g(t)d  \alpha^n
         - \int_{\eta^{\infty}}^{\tau^{\infty}}  u^{\infty}(\alpha^{\infty}) g(t)d \alpha^{\infty}  \Big|
    \nonumber\\
  &\leq  C |\eta^n-\eta^{\infty}|+C|\tau^n-\tau^{\infty}|
      + \int_{\eta^{\infty}}^{\tau^{\infty}}
          \Big| u^n(\alpha)  g((\wt \xi_1^n)^{-1}(1))
             - u^{\infty}(\alpha) g((\wt \xi_1^{\infty})^{-1}(1))\Big| d \alpha
   \nonumber\\
  &\leq  C |\eta^n-\eta^{\infty}|+C|\tau^n-\tau^{\infty}|
    + C \int_{\eta^{\infty}}^{\tau^{\infty}}
              | g((\wt \xi_1^n)^{-1}(1))- g((\wt \xi_1^{\infty})^{-1}(1)) | d \alpha
    \nonumber\\
   &\quad +\int_{\eta^{\infty}}^{\tau^{\infty}}
       | u^n(\alpha)- u^{\infty}(\alpha)|
   |g((\wt \xi_1^{\infty})^{-1}(1))| d \alpha
     \nonumber\\
    & \quad \longrightarrow 0, \qquad \text{as}\quad  n \rightarrow \infty.
\end{align*}
This concludes the proof of \eqref{yn-yInf} for the case
$\xi_2^{\infty}(T)>1$.

As a result,
 \begin{align*}
 J_p(u^{\infty})&=\|u^{\infty}\|_{L^{\infty}(0,T)}^2
    + \|y^{\infty}-y_d\|_{L^p(0,T)}^2
  \nonumber\\
 &\leq  \liminf_{n\rightarrow \infty} \|u^n\|_{L^{\infty}(0,T)}^2
    + \liminf_{n\rightarrow \infty} \|y^n-y_d\|_{L^p(0,T)}^2
  \nonumber\\
 & \leq  \liminf_{n\rightarrow \infty} J_p(u^n)=\lim_{n\rightarrow \infty} J_p(u^n)
    =\inf_{u\in L^{\infty}_{+}(0,T)}J_p(u).
 \end{align*}
This shows $u_{\infty}$ is a minimizer of $J_p(u)$ in
$L^{\infty}_{+}(0,T)$, and it proves also that $y^n \rightarrow
y^{\infty}$ strongly in $L^p(0,T)$.
\end{proof}


\appendix

\section{Appendix}

\subsection{Basic Lemmas}

The following two lemmas are used to prove the existence of weak
solution to Cauchy problem \eqref{eq}, \eqref{eq-IC} and
\eqref{eq-BC}, when changing variables in certain integrals (see
Section A.2). We recall some assumptions that are given:
$\lambda>0$, $\lambda\in C^1([0,1]\times [0,\infty))$, $W\in
\Omega_{\delta,M}$ (see \eqref{Omega} for definition).

  \begin{lem}\label{jacobi1}
  Let $\xi_3$ be the characteristic (see \eqref{xi3} for definition)
  which passes through the point $(t,x)$ and
  intersects the $t$-axis at the point $(\alpha,0)$.
  Then we have
  \be  \f{d\alpha}{dx}=-\f{1}{\lambda(0,W(\alpha))}\,
   e^{-\int_{\alpha}^t \lambda_x(\xi_3(\theta),W(\theta))d\theta}.
  \ee
  \end{lem}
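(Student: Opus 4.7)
The plan is to derive the formula by implicit differentiation, using the smooth dependence of the ODE flow on its initial condition. Write $\xi_3 = \xi_3(s;t,x)$ for the characteristic through $(t,x)$, so that $\xi_3$ satisfies $\partial_s \xi_3 = \lambda(\xi_3,W(s))$ with $\xi_3(t;t,x) = x$, and $\alpha = \alpha(x)$ is defined implicitly by the relation
\begin{equation*}
  \xi_3(\alpha(x);t,x) = 0.
\end{equation*}
Since $\lambda \in C^1$, standard ODE theory guarantees $\xi_3$ is $C^1$ in $(s,x)$, and the non-vanishing of $\lambda$ combined with the implicit function theorem makes $\alpha$ a $C^1$ function of $x$ (at least locally).

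Next, I would differentiate the defining relation with respect to $x$ (with $t$ fixed), obtaining
\begin{equation*}
  \lambda\bigl(0,W(\alpha)\bigr)\,\alpha'(x) + \eta(\alpha) = 0, \qquad \eta(s) := \pd{\xi_3(s;t,x)}{x}.
\end{equation*}
The function $\eta$ is obtained by differentiating the ODE $\partial_s \xi_3 = \lambda(\xi_3,W(s))$ with respect to the parameter $x$; since the dependence on $x$ enters only through the initial condition $\xi_3(t) = x$, $\eta$ solves the linear variational equation
\begin{equation*}
  \f{d\eta}{ds} = \lambda_x(\xi_3(s),W(s))\,\eta(s), \qquad \eta(t)=1,
\end{equation*}
whose explicit solution is $\eta(s) = \exp\bigl(\int_t^s \lambda_x(\xi_3(\theta),W(\theta))\,d\theta\bigr)$.

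Finally, evaluating $\eta$ at $s = \alpha$ gives $\eta(\alpha) = \exp\bigl(-\int_\alpha^t \lambda_x(\xi_3(\theta),W(\theta))\,d\theta\bigr)$, and substituting into the implicit-differentiation identity yields the claimed formula
\begin{equation*}
  \f{d\alpha}{dx} = -\f{1}{\lambda(0,W(\alpha))}\,e^{-\int_{\alpha}^t \lambda_x(\xi_3(\theta),W(\theta))\,d\theta}.
\end{equation*}
The only subtle point is the justification of the $C^1$ regularity of the flow and of $\alpha(x)$; this is routine given $\lambda\in C^1$ and $W$ continuous (so that the right-hand side of the characteristic ODE is Lipschitz in $\xi$), which ensures the implicit function theorem applies and makes the computation above rigorous.
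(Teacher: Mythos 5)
Your proof is correct: the implicit differentiation of $\xi_3(\alpha(x);t,x)=0$ combined with the variational equation for $\eta=\partial_x\xi_3$ is exactly the standard argument, and it is the one the paper implicitly relies on when it declares the lemma ``trivial'' and defers to the reference \cite{LiYuBook}. The regularity issue you flag is handled correctly, since $W$ is (Lipschitz) continuous and $\lambda\in C^1$, so the flow is $C^1$ in the initial data and $\lambda(0,W(\alpha))>0$ lets the implicit function theorem apply.
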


 \begin{lem}\label{jacobi2}
  Let $\xi_4$ be the characteristic (see \eqref{xi4} for definition)
  which passes through the point $(t,x)$
  and intersects the $x$-axis at the point $(0,\beta)$.
  Then we have
  \be  \f{d\beta}{dx}=
   e^{-\int_0^t \lambda_x(\xi_4(\theta),W(\theta))d\theta}.
  \ee
  \end{lem}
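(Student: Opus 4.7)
The plan is to view the characteristic $\xi_4(\cdot;t,x)$, with $t$ fixed, as a one-parameter family of solutions to the ODE \eqref{xi4} indexed by the initial data $x$, and then to differentiate this family with respect to the parameter $x$. Since $\lambda\in C^1([0,1]\times[0,\infty))$ and $W\in C^0([0,\delta])$ is bounded on $[0,t]$, the classical theorem on smooth dependence of solutions of ODEs on initial data applies: $\xi_4(s;t,x)$ is $C^1$ with respect to $x$ on the interval $[0,t]$. In particular, $\beta=\xi_4(0;t,x)$ is a $C^1$ function of $x$.

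Next I would set $\zeta(s):=\partial_x\xi_4(s;t,x)$ and differentiate the defining identity $\tfrac{d}{ds}\xi_4(s;t,x)=\lambda(\xi_4(s;t,x),W(s))$ in $x$. Because $W$ does not depend on $x$, one obtains the linear variational equation
\begin{equation*}
\frac{d\zeta}{ds}=\lambda_x(\xi_4(s),W(s))\,\zeta(s),\qquad \zeta(t)=1,
\end{equation*}
where the initial value comes from $\xi_4(t;t,x)=x$, so $\partial_x\xi_4(t;t,x)=1$. This is a scalar linear ODE whose solution is given by the explicit formula
\begin{equation*}
\zeta(s)=\exp\!\Bigl(\int_t^s\lambda_x(\xi_4(\theta),W(\theta))\,d\theta\Bigr).
\end{equation*}

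Finally, evaluating at $s=0$ and using $\beta=\xi_4(0;t,x)$, I would get
\begin{equation*}
\frac{d\beta}{dx}=\zeta(0)=\exp\!\Bigl(-\int_0^t\lambda_x(\xi_4(\theta),W(\theta))\,d\theta\Bigr),
\end{equation*}
which is the claimed formula. There is essentially no obstacle here: the only non-routine ingredient is invoking $C^1$-dependence on initial data, which is immediate from the $C^1$ regularity of $\lambda$ and the continuity (indeed Lipschitz, by \eqref{w't-bound-loc}) of $W$; everything else is a one-line integration of a linear ODE. The companion Lemma \ref{jacobi1} is proved by the same method applied to $\xi_3$, combined with the implicit relation $\xi_3(\alpha;t,x)=0$ differentiated in $x$ to extract $d\alpha/dx$ via the chain rule, which produces the extra factor $1/\lambda(0,W(\alpha))$ and the sign.
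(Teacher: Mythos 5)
Your proof is correct: the variational equation $\zeta'=\lambda_x(\xi_4(s),W(s))\,\zeta$ with $\zeta(t)=1$ is the right object, its explicit exponential solution evaluated at $s=0$ gives exactly the stated Jacobian, and the hypotheses for $C^1$-dependence on initial data are satisfied since $\lambda\in C^1$ and $W$ is continuous. The paper itself omits the proof entirely (calling it trivial and citing Li--Yu), and your argument is precisely the standard computation intended there, including the correct sketch of how the companion Lemma \ref{jacobi1} follows by differentiating the implicit relation $\xi_3(\alpha)=0$.
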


The proofs of Lemma \ref{jacobi1} and Lemma \ref{jacobi2} are
trivial and they can be found in \cite{LiYuBook}.

The next lemma is useful to prove a uniqueness result (Section 2,
Lemma \ref{uni-sol}) for Cauchy problem \eqref{eq}, \eqref{eq-IC}
and \eqref{eq-BC}.

 \begin{lem}\label{lem-test-function}
If $\rho\in C^0([0,T];L^1(0,1)) \cap L^{\infty}((0,T)\times (0,1))$
is a weak solution to Cauchy problem \eqref{eq}, \eqref{eq-IC} and
\eqref{eq-BC}, then for every $t\in[0,T]$ and every $\varphi\in
C^1([0,t]\times[0,1])$ such that
 \beq
  \varphi(\tau,1)= 0,\quad \forall \tau\in[0,t],
 \eeq
one has
 \begin{multline*}
  \int_0^{t} \int_0^1 \rho(\tau,x)(\varphi_{\tau}(\tau,x)
   +\lambda(x, W(\tau))\varphi_x(\tau,x))dxd\tau
   +\int_0^{t}u(\tau)\varphi(\tau,0)d\tau
   \\
 -\int_0^1 \rho(t,x)\varphi(t,x)dx
    +\int_0^1\rho_0(x)\varphi(0,x)dx =0.
 \end{multline*}
\end{lem}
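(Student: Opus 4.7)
The plan is to reduce the claim to the given weak-solution identity via a standard time-cutoff argument. For small $\epsilon>0$ I introduce a $C^1$ cutoff $\chi_\epsilon\colon[0,t]\to[0,1]$ with $\chi_\epsilon\equiv 1$ on $[0,t-\epsilon]$, $\chi_\epsilon(t)=0$, and $\chi_\epsilon$ affine on $[t-\epsilon,t]$, so that $\chi_\epsilon'\equiv -1/\epsilon$ there. Define the test function $\tilde\varphi(\tau,x):=\varphi(\tau,x)\chi_\epsilon(\tau)$. Since $\tilde\varphi\in C^1([0,t]\times[0,1])$ and satisfies both $\tilde\varphi(t,x)=0$ and $\tilde\varphi(\tau,1)=0$, it is admissible in Definition \ref{weaksol} with the time horizon $\tau$ in that definition replaced by $t$.

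Inserting $\tilde\varphi$ into the weak-solution identity and expanding $\tilde\varphi_\tau=\varphi_\tau\chi_\epsilon+\varphi\chi_\epsilon'$, $\tilde\varphi_x=\varphi_x\chi_\epsilon$, I obtain
\begin{multline*}
\int_0^{t}\!\!\int_0^1 \rho\,\chi_\epsilon(\tau)\bigl(\varphi_\tau+\lambda(x,W(\tau))\varphi_x\bigr)\,dx\,d\tau
+\int_0^t\!\!\int_0^1 \rho(\tau,x)\varphi(\tau,x)\chi_\epsilon'(\tau)\,dx\,d\tau \\
+\int_0^{t} u(\tau)\varphi(\tau,0)\chi_\epsilon(\tau)\,d\tau
+\int_0^1 \rho_0(x)\varphi(0,x)\,dx=0,
\end{multline*}
where I used $\chi_\epsilon(0)=1$ for $\epsilon<t$. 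Now I let $\epsilon\to 0$. For the first, third and fourth terms, $\chi_\epsilon$ is uniformly bounded and converges pointwise to $1$; since $\rho\in L^\infty$, $u\in L^\infty$ and $\varphi$ is bounded on the compact $[0,t]\times[0,1]$, the Lebesgue dominated convergence theorem delivers the three expected limits.

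The only delicate term is the one carrying $\chi_\epsilon'$, which equals
$$-\frac{1}{\epsilon}\int_{t-\epsilon}^t\!\!\int_0^1 \rho(\tau,x)\varphi(\tau,x)\,dx\,d\tau.$$
Here I use the hypothesis $\rho\in C^0([0,T];L^1(0,1))$: for $\tau\to t$, $\rho(\tau,\cdot)\to\rho(t,\cdot)$ in $L^1(0,1)$, and since $\varphi$ is uniformly continuous and bounded on $[0,t]\times[0,1]$, the scalar map $\tau\mapsto\int_0^1\rho(\tau,x)\varphi(\tau,x)\,dx$ is continuous on $[0,t]$. The mean-value property of integrals then yields the limit $-\int_0^1\rho(t,x)\varphi(t,x)\,dx$, which furnishes exactly the missing boundary term. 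Combining the four limits produces the claimed identity. The main (and only nontrivial) point is this last continuity argument; everything else is a routine product-rule plus dominated convergence.
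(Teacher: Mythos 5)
Your proof is correct and is essentially the argument the paper itself invokes: the paper proves this lemma only by reference to Lemma 2.2 of \cite{CKWang}, whose proof is exactly this time-cutoff of the test function together with the continuity of $\tau\mapsto\int_0^1\rho(\tau,x)\varphi(\tau,x)\,dx$ supplied by $\rho\in C^0([0,T];L^1(0,1))$. One cosmetic repair: a cutoff that is constant on $[0,t-\epsilon]$ and affine on $[t-\epsilon,t]$ is only Lipschitz, not $C^1$, at $\tau=t-\epsilon$, so you should either smooth that corner or note that such test functions are admissible by a routine density argument.
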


The proof of Lemma \ref{lem-test-function} is the same as the proof
of Lemma 2.2 in \cite[Section 2.2]{CKWang}.

\subsection{Proof of Lemma \ref{rho-sol}}
First we recall some notations $M, \ov \lambda(M),
\|\lambda\|_{C^0}, \|\lambda_x\|_{C^0},\|\lambda_W\|_{C^0}$ which
are defined in Section 2. And the characteristic curves
$\xi_1,\xi_2,\xi_3,\xi_4$ are defined by \eqref{xi1},
\eqref{xi2},\eqref{xi3}, \eqref{xi4}, respectively.

By definition \eqref{rho-loc}, it is easy to see that $\rho\in
L^{\infty} ((0,\delta)\times (0,1))$ and that estimates
\eqref{wt-bound-loc} and \eqref{rho-bound-loc} hold. Next we prove
that the function defined by \eqref{rho-loc} belongs to
$C^0([0,\delta];L^p(0,1))$ for all $p\in [1,\infty)$, i.e., for
every $\wt t,t\in [0,\delta]$ with $\wt t\geq t$, we need to prove
 \beq  \|\rho(\wt t,\cdot)-\rho(t,\cdot)\|_{L^p(0,1)}\rightarrow 0,
    \quad \text{as} \quad |\wt t-t|\rightarrow 0
   \eeq
for all $p\in [1,\infty)$. In order to do that, we estimate
$\int_0^{\xi_2(t)} |\rho(\wt t,x)-\rho(t,x)|^p dx$,
$\int_{\xi_2(\,\wt t\,)}^1 |\rho(\wt t,x)-\rho(t,x)|^p dx$ and
$\int_{\xi_2(t)}^{\xi_2(\,\wt t\,)} |\rho(\wt t,x)-\rho(t,x)|^p dx$,
successively.

For almost every $x\in [0,\xi_2(t)]$, by \eqref{rho-loc} and noting
$W\in \Omega_{\delta,M}$ (see \eqref{Omega} for definition), we know
(see Fig \ref{Fig10})
 \begin{align*}
     &\quad  |\rho(\wt t,x)-\rho(t,x)|
                \\
    &= \Big|\f{u(\wt \alpha)}{\lambda(0,W(\wt \alpha))}
            \, e^{-\int_{\wt \alpha}^{\wt t} \lambda_x(\wt \xi_3(\theta),W(\theta)) d\theta}
        - \f{u(\alpha)}{\lambda(0,W(\alpha))}
            \, e^{-\int_{\alpha}^t \lambda_x(\xi_3(\theta),W(\theta)) d\theta} \Big|
                \\
    &\leq \f{|u(\wt \alpha)-u(\alpha)|}{\lambda(0,W(\wt \alpha))}
         \, e^{-\int_{\wt \alpha}^{\wt t} \lambda_x(\wt \xi_3(\theta),W(\theta)) d\theta}
                \\
    &\quad + \Big|\f{u(\alpha)}{\lambda(0,W(\wt \alpha))}
          -\f{u( \alpha)}{\lambda(0,W(\alpha))}\Big|
            \, e^{-\int_{\wt \alpha}^{\wt t} \lambda_x(\wt \xi_3(\theta),W(\theta)) d\theta}
                \\
    &\quad +\f{u(\alpha)}{\lambda(0,W(\alpha))}
            \, e^{-\int_{\wt \alpha}^{\wt t} \lambda_x(\wt \xi_3(\theta),W(\theta)) d\theta}
          - e^{-\int_{\alpha}^t \lambda_x(\xi_3(\theta),W(\theta)) d\theta} \Big|
               \\
    &\leq C |u(\wt \alpha)-u(\alpha)|+C|u( \alpha)||W(\wt \alpha)-W(\alpha)|
               \\
     &\quad +C|u( \alpha)| \Big| \int_{\wt \alpha}^{\wt t} \lambda_x(\wt \xi_3(\theta),W(\theta)) d\theta
                 -\int_{\alpha}^t \lambda_x(\xi_3(\theta),W(\theta)) d\theta \Big|,
  \end{align*}
where $\wt \xi_3$ denotes the characteristic curve passing through
$(\wt t,x)$:
 \beq   \f{d \wt \xi_3}{ds}=\lambda(\wt \xi_3(s),W(s)),
   \quad \wt \xi_3(\,\wt t\,)=x
   \eeq
and it intersects $t$-axis at $(\wt \alpha,0)$: $\wt \xi_3(\wt
\alpha)=0$.  Here  and hereafter in Appendix, we denote by $C$
various constants which do not depend on $x,t,\wt t$.

For the given $u\in L^{\infty}(0,T)$, we let $\{u_n\}_{n=1}^{\infty}
\subset C^1([0,T])$ be such that $u_n \rightarrow u$ in $L^p(0,T)$.
And for the given $\lambda \in C^1([0,1]\times [0,\infty))$, we let
$\{v_n\}_{n=1}^{\infty} \subset C^1([0,1]\times [0,M])$ be such that
$v_n \rightarrow \lambda_x$ in $C^0([0,1]\times [0,M])$. Using the
sequences $\{u_n\}_{n=1}^{\infty}$, $\{v_n\}_{n=1}^{\infty}$ and
noting \eqref{w't-bound-loc}, we have for almost every $x\in
[0,\xi_2(t)]$ that
 \begin{align}\label{rho-cont-1}
   &\quad  |\rho(\wt t,x)-\rho(t,x)|
                \nonumber\\
     &\leq C |u(\wt \alpha)-u(\alpha)|+C|u( \alpha)||W(\wt \alpha)-W(\alpha)|
              \nonumber\\
     &\quad +C|u( \alpha)| \Big| \int_{\wt \alpha}^{\wt t} \lambda_x(\wt \xi_3(\theta),W(\theta)) d\theta
                 -\int_{\alpha}^t \lambda_x(\xi_3(\theta),W(\theta)) d\theta \Big|
            \nonumber\\
     &\leq C |u^n(\wt \alpha)-u(\wt \alpha)|+  C|u^n( \alpha)-u( \alpha)|
          + C|u^n(\wt \alpha)-u^n(\alpha)| +C|u( \alpha)||W(\wt \alpha)-W(\alpha)|
              \nonumber\\
     &\quad +C|u( \alpha)|   \int_{\wt \alpha}^{\wt t}
           |v_n(\wt \xi_3(\theta),W(\theta))-\lambda_x(\wt \xi_3(\theta),W(\theta))| d\theta
               \nonumber\\
     &\quad +C|u( \alpha)| \int_{\alpha}^{t}
           |v_n(\xi_3(\theta),W(\theta))-\lambda_x(\xi_3(\theta),W(\theta))| d\theta
                \nonumber\\
     &\quad +C|u( \alpha)| \Big| \int_{\wt \alpha}^{\wt t} v_n(\wt \xi_3(\theta),W(\theta)) d\theta
                 -\int_{\alpha}^t v_n(\xi_3(\theta),W(\theta)) d\theta \Big|
                \nonumber\\
     &\leq C |u^n(\wt \alpha)-u(\wt \alpha)|+  C|u^n( \alpha)-u( \alpha)|
          +C  |u( \alpha)| \|v_n-\lambda_x\|_{C^0([0,1]\times [0,M])} + C_n|\wt \alpha - \alpha|
               \nonumber \\
    & \quad  +C_n |u( \alpha)| |\wt t-t|+C_n |u( \alpha)| |\wt \alpha-\alpha|
          + C_n |u( \alpha)| \|\wt \xi_3-\xi_3\|_{C^0([\wt \alpha,t])}.
   \end{align}
Here  and hereafter, we denote by $C_n$ various constants which do
not depend on $x,t,\wt t$ but may depend on $n$ (the index of the
corresponding approximating sequences, e.g.
$\{u_n\}_{n=1}^{\infty}$, $\{v_n\}_{n=1}^{\infty}$ and so on).

By the definitions of $\wt \xi_3, \xi_3$ and \eqref{delta}, we
derive
 \begin{align*} |\wt \xi_3(s)-\xi_3(s)|
   &=\Big |\int_s^{\wt t} \lambda(\wt \xi_3(\theta),W(\theta)) d\theta
      - \int_s^t\lambda(\xi_3(\theta),W(\theta))d\theta \Big |
      \\
   & \leq C |\wt t-t|+ \delta \|\lambda_x\|_{C^0}
     \|\wt \xi_3-\xi_3\|_{C^0([\wt \alpha,t])},
    \quad  \forall s\in [\wt \alpha,t],
  \end{align*}
and furthermore
 \be \label{wt-xi3-xi3} \|\wt \xi_3-\xi_3\|_{C^0([\wt \alpha,t])}
  \leq C |\wt t-t|.  \ee
Meanwhile, from the fact that
 \beq
  \wt \xi_3(\,\wt t\,)=\int_{\wt \alpha}^{\wt t}
  \lambda(\wt\xi_3(\theta),W(\theta))d\theta =x
  =\xi_3(t)= \int_{\alpha}^t
  \lambda(\xi_3(\theta),W(\theta))d\theta,
 \eeq
and  definition \eqref{ov-lam} of $\ov \lambda(M)$, we get
  \begin{align} \label{wt-alp-alp}
     |\wt \alpha-\alpha|
  &\leq \f1 {\ov \lambda(M)} \Big| \int_{\alpha}^{\wt
     \alpha}\lambda(\xi_3(\theta),W(\theta))d\theta \Big|
    \nonumber \\
   & = \f1 {\ov \lambda(M)}
      \Big|  \int_{\wt \alpha}^{t} (\lambda(\wt \xi_3(\theta),W(\theta))
               -\lambda(\xi_3(\theta),W(\theta)) ) d\theta
            +\int_t^{\wt t}\lambda(\wt \xi_3(\theta),W(\theta)) d\theta       \Big|
    \nonumber \\
   & \leq C \|\wt \xi_3-\xi_3\|_{C^0([\wt \alpha,t])}+ C|\wt t-t|
    \nonumber \\
   & \leq C |\wt t-t|.
   \end{align}
Therefore, we get easily from \eqref{rho-cont-1}, \eqref{wt-xi3-xi3}
\eqref{wt-alp-alp} and Lemma \ref{jacobi1}, that
  \begin{align}\label{rho-cont-2}
    & \quad \int_0^{\xi_2(t)}|\rho(\wt t,x)-\rho(t,x)|^p dx
     \nonumber \\
     & \leq  C \|u_n-u\|_{L^p(0,T)}^p +C\|v_n-\lambda_x\|^p_{C^0([0,1]\times[0,M])}
        + C_n |\wt t-t|^p.
       \end{align}
For almost every $x\in [\xi_2(\,\wt t\,),1]$, by definition
\eqref{rho-loc} of $\rho$, we have (see Fig \ref{Fig11})
 \begin{align*}
   &\quad |\rho(\wt t,x)-\rho(t,x)|
        \\
   & = \Big|\rho_0(\wt \beta)\, e^{-\int_0^{\wt t} \lambda_x(\wt \xi_4(\theta),W(\theta)) d\theta}
              - \rho_0(\beta) \, e^{-\int_0^t \lambda_x(\xi_4(\theta),W(\theta)) d\theta} \Big|
        \\
   & \leq |\rho_0(\wt \beta)-\rho_0(\beta)|
            \, e^{-\int_0^{\wt t} \lambda_x(\wt \xi_4(\theta),W(\theta)) d\theta}
        + |\rho_0(\beta)| \Big | e^{-\int_0^{\wt t} \lambda_x(\wt \xi_4(\theta),W(\theta)) d\theta}
              -e^{-\int_0^t \lambda_x(\xi_4(\theta),W(\theta))   d\theta}\Big|
        \\
   &\leq C |\rho_0(\wt \beta)-\rho_0(\beta)|
       + C |\rho_0(\beta)| \Big| \int_0^{\wt t} \lambda_x(\wt \xi_4(\theta),W(\theta)) d\theta
           -\int_0^t \lambda_x(\xi_4(\theta),W(\theta))   d\theta\Big|
 \end{align*}
where the characteristic curve $\wt \xi_4$ passing through $(\wt
t,x)$ is defined by
 \beq   \f{d \wt \xi_4}{ds}=\lambda(\wt \xi_4(s),W(s)),
   \quad \wt \xi_4(\,\wt t\,)=x
   \eeq
and it intersects $x$-axis at $(0,\wt \beta)$: $\wt \xi_4(0)= \wt
\beta$.

For the given $\rho_0\in L^{\infty}(0,1)$, we let
$\{\rho_0^n\}_{n=1}^{\infty} \subset C^1([0,1])$ be such that
$\rho_0^n \rightarrow \rho_0$ in $L^p(0,1)$. Using  sequences
$\{\rho_0^n\}_{n=1}^{\infty}$ and $\{v_n\}_{n=1}^{\infty}$, we
obtain for almost every $x \in [\xi_2(\,\wt t\,),1]$ that
 \begin{align}\label{rho-cont-3}
   & \qquad   |\rho(\wt t,x)-\rho(t,x)|
    \nonumber \\
   & \leq C |\rho_0^n(\wt \beta)-\rho_0(\wt \beta)|+  C|\rho_0^n(\beta)-\rho_0(\beta)|
       + C  |\rho_0^n(\wt\beta)-\rho_0^n(\beta)|
       +C |\rho_0(\beta)| \|v_n-\lambda_x\|_{C^0([0,1]\times [0,M])}
    \nonumber \\
   & \quad + C |\rho_0(\beta)|  \Big | \int_0^{\wt t} v_n(\wt \xi_4(\theta),W(\theta))   d\theta
     -\int_0^t v_n(\xi_4(\theta),W(\theta))  d\theta\Big|
    \nonumber \\
   & \leq C |\rho_0^n(\wt \beta)-\rho_0(\wt \beta)|+  C|\rho_0^n(\beta)-\rho_0(\beta)|
        +C |\rho_0(\beta)| \|v_n-\lambda_x\|_{C^0([0,1]\times [0,M])}
    \nonumber \\
   & \quad   + C_n  |\wt\beta-\beta| +C_n |\rho_0(\beta)| |\wt t-t|
         + C_n |\rho_0(\beta)| \|\wt  \xi_4-\xi_4\|_{C^0([0,t])}
  \end{align}
By the definitions of $\wt \xi_4, \xi_4$, we have
 \be \label{wt-xi4-xi4} \|\wt \xi_4-\xi_4\|_{C^0([0,t])}
  \leq C |\wt t-t|,  \ee
which is similar to \eqref{wt-xi3-xi3}. In particular,
 \be \label{wt-beta-beta}|\wt \beta-\beta|= |\wt \xi_4(0)-\xi_4(0)|
  \leq C |\wt t-t|.  \ee
Therefore, we get easily from \eqref{rho-cont-3},
\eqref{wt-xi4-xi4}, \eqref{wt-beta-beta}  and Lemma \ref{jacobi2}
that
  \begin{align}\label{rho-cont-4}
    & \quad \int_{\xi_2(\,\wt t\,)}^1|\rho(\wt t,x)-\rho(t,x)|^p dx
      \nonumber \\
    & \leq  C \|\rho_0^n-\rho_0\|_{L^p(0,1)}^p +C\|v_n-\lambda_x\|^p_{C^0([0,1]\times[0,M])}
       + C_n |\wt t-t|^p
            \end{align}

Finally, we turn to estimate $ \int_{\xi_2(t)}^{\xi_2(\,\wt
t\,)}|\rho(\wt t,x)-\rho(t,x)|^p dx$  (see Fig \ref{Fig12}).
Obviously, we have
  \be \label{rho-cont-5}
     \int_{\xi_2(t)}^{\xi_2(\,\wt t\,)}|\rho(\wt t,x)-\rho(t,x)|^p dx
      \leq \int_{\xi_2(t)}^x |\rho(\wt t,x)-\rho(t,x)|^p dx
      +\int_x^{\xi_2(\,\wt t\,)}|\rho(\wt t,x)-\rho(t,x)|^p dx.
      \ee
Then similar to estimates \eqref{rho-cont-2} and \eqref{rho-cont-4},
we get easily from \eqref{rho-cont-5} that
 \begin{align} \label{rho-cont-6}
   & \quad \int_{\xi_2(t)}^{\xi_2(\,\wt t\,)}|\rho(\wt t,x)-\rho(t,x)|^p dx
     \nonumber\\
   &\leq  C \|u_n-u\|_{L^p(0,T)}^p+ C \|\rho_0^n-\rho_0\|_{L^p(0,1)}^p
        +C\|v_n -\lambda_x\|^p_{C^0([0,1]\times[0,M])}
       + C_n |\wt t-t|^p.
   \end{align}

Summarizing  estimates \eqref{rho-cont-2}, \eqref{rho-cont-4} and
\eqref{rho-cont-6}, we find that for every $p\in [1,\infty)$,
 \begin{align} \label{rho-cont-7}
   &\quad \|\rho(\wt t,\cdot)-\rho(t,\cdot)\|_{L^p(0,1)}^p
     \nonumber\\
   &\leq  C \|u_n-u\|_{L^p(0,T)}^p+ C \|\rho_0^n-\rho_0\|_{L^p(0,1)}^p
        +C\|v_n -\lambda_x\|^p_{C^0([0,1]\times[0,M])}
       + C_n |\wt t-t|^p .
   \end{align}
Therefore by Lebesque dominated convergence theorem, letting $n$
large enough and then $|\wt t-t|$ small enough, the right hand side
of \eqref{rho-cont-7} can be arbitrarily small. This proves that the
function $\rho$ defined by \eqref{rho-loc} belongs to
$C^0([0,\delta];L^p(0,1))$ for all $p\in [1,\infty)$.

Finally, we prove that $\rho$ defined by \eqref{rho-loc} is indeed a
weak solution to Cauchy problem \eqref{eq}, \eqref{eq-IC} and
\eqref{eq-BC}. Let $\tau \in [0,\delta]$. For any $\varphi\in
C^1([0,\tau]\times [0,1])$ with $\varphi(\tau,x)\equiv 0$ and
$\varphi(t,1)\equiv 0$, we have
 \begin{align*}
  A& :=\int_0^{\tau} \int_0^1 \rho(t,x)(\varphi_t(t,x)
        +\lambda(x,W(t))\varphi_x(t,x)) dx dt
   \\
 & =\int_0^{\tau} \int_0^{\xi_2(t)} \f{u(\alpha)}{\lambda(0,W(\alpha))}
      \, e^{-\int_{\alpha}^t \lambda_x(\xi_3(\theta),W(\theta)) d\theta}
         \cdot (\varphi_t(t,x)+\lambda(x,W(t))\varphi_x(t,x)) dx dt
 \nonumber\\
 & \quad +\int_0^{\tau}\int_{\xi_2(t)}^1
         \rho_0(\beta)\,  e^{-\int_0^t \lambda_x(\xi_4(\theta),W(\theta)) d\theta }
        \cdot (\varphi_t(t,x) +\lambda(x,W(t))\varphi_x(t,x)) dx dt
  \end{align*}
By Lemma \ref{jacobi1} and Lemma \ref{jacobi2}, we obtain
    \begin{align*}
 A &=\int_0^{\tau} \int_0^t u(\alpha)(\varphi_t(t,\xi_3(t))
     +\lambda(\xi_3(t),W(t))\varphi_x(t,\xi_3(t)) d\alpha dt
  \nonumber\\
 & \quad +\int_0^{\tau}\int_0^{1-\int_0^t \lambda(\xi_1(\theta),W(\theta))d\theta}
     \rho_0(\beta) (\varphi_t(t,\xi_4(t)) +\lambda(\xi_4(t),W(t))\varphi_x(t,\xi_4(t))) d\beta dt
 \nonumber\\
 &=\int_0^{\tau} \int_0^t u(\alpha) \f{d\varphi (t,\xi_3(t))}{dt} d\alpha dt
     +\int_0^{\tau}\int_0^{1-\int_0^t \lambda(\xi_1(\theta),W(\theta))d\theta}
      \rho_0(\beta) \f{d\varphi(t,\xi_4(t))}{dt}  d\beta dt.
  \end{align*}
Hence by changing the order of integral, we get
 $$ A =\int_0^{\tau} \int_{\alpha}^{\tau}
     u(\alpha)\frac{d\varphi(t,\xi_3(t))}{dt} dt d\alpha
      +\Big(\int_0^{f(\tau)}\int_0^{\tau }
        +\int_{f(\tau)}^1\int_0^{f^{-1}(\beta)}\Big)
         \rho_0(\beta)\frac{d\varphi(t,\xi_4(t))}{dt} dt d\beta
 $$
where
 \beq f(t):=1-\int_0^t \lambda(\xi_1(\theta),W(\theta))d\theta
 \eeq
represents the coordinate that the characteristic curve $\xi_1$
intersects with $x$-axis and $f^{-1}(\beta)$ represents the time
when the characteristic curve staring from $(0,\beta)$ arrives at
the boundary $x=1$. Consequently, for any $\beta \in [f(\tau),1]$,
$\xi_1$ and $\xi_4$ are identical to each other since they pass
through the same point $(0,\beta)$,  so we get immediately that
 \beq \xi_4(f^{-1}(\beta))=\xi_1(f^{-1}(\beta))=1.
 \eeq
and finally that
  \begin{align*}
  A &=\int_0^{\tau}  u(\alpha) (\varphi(\tau,\xi_3(\tau))- \varphi(\alpha,\xi_3(\alpha))) d\alpha
      +  \int_0^{f(\tau)}   \rho_0(\beta)
          (\varphi(\tau,\xi_4(\tau))- \varphi(0,\xi_4(0)))  d\beta
   \\
   &\quad   +  \int_{f(\tau)}^1   \rho_0(\beta)
         (\varphi(f^{-1}(\beta),\xi_4(f^{-1}(\beta)))- \varphi(0,\xi_4(0)))   d\beta
   \\
  &= -\int_0^{\tau}  u(\alpha) \varphi(\alpha,0) d\alpha
      - \int_0^{f(\tau)}   \rho_0(\beta) \varphi(0,\beta)  d\beta
      -  \int_{f(\tau)}^1   \rho_0(\beta) \varphi(0,\beta)  d\beta
    \\
  &= -\int_0^{\tau}  u(\alpha) \varphi(\alpha,0) d\alpha
      - \int_0^1   \rho_0(\beta) \varphi(0,\beta)  d\beta.
  \end{align*}

This proves that $\rho$ given by \eqref{rho} is indeed a weak
solution to Cauchy problem \eqref{eq}, \eqref{eq-IC} and
\eqref{eq-BC}.

\subsection{Proof of Lemma \ref{uni-sol}}

Let us assume that $\ov\rho\in C^0([0,\delta];L^1(0,1)) \cap
L^{\infty}((0,\delta)\times (0,1))$ is a weak solution to  Cauchy
problem \eqref{eq}, \eqref{eq-IC} and \eqref{eq-BC}. Here $\delta$
is such that \eqref{delta} holds. Then by Lemma
\ref{lem-test-function}, for any fixed $t\in[0,\delta]$ and
$\varphi\in{C^1([0,t]\times[0,1])}$ with $\varphi(\tau,1)\equiv 0$
for $\tau\in [0,t]$,
 \begin{multline} \label{test}
  \int_0^{t} \int_0^1 \ov \rho(\tau,x)(\varphi_{\tau}(\tau,x)
   +\lambda(x, \ov W(\tau))\varphi_x(\tau,x))dxd\tau
   +\int_0^{t}u(\tau)\varphi(\tau,0)d\tau
   \\
 -\int_0^1 \ov \rho(t,x)\varphi(t,x)dx
    +\int_0^1\rho_0(x)\varphi(0,x)dx =0.
 \end{multline}
 where $\ov W(\tau):=\int_0^1\ov \rho(\tau,x)\, dx$.

 Let $\psi_0\in C_0^1(0,1)$, then we choose the test function
 as the solution to the following \emph{backward linear} Cauchy problem
 \beq
  \begin{cases}
   \psi_{\tau} +\lambda(x,\ov W(\tau))\psi_x =0, \quad & 0\leq \tau\leq t,\ 0\leq x\leq 1,\\
   \psi(t,x)=\psi_0(x), \quad & 0\leq x\leq 1,\\
    \psi(\tau,1)=0, \quad & 0\leq \tau\leq t.
   \end{cases}
  \eeq
For any fixed $t\in [0,\delta]$, we define the characteristic curves
$\ov \xi_1,\ov \xi_2,\ov \xi_3,\ov \xi_4$  by
\begin{align*}
  &\f{d\ov\xi_1}{ds}=\lambda(\ov\xi_1(s),\ov W(s)), \quad \ov\xi_1(t)=1,\\
  & \f{d\ov \xi_2}{ds}=\lambda(\ov\xi_2(s),\ov W(s)), \quad \ov\xi_2(0)=0,\\
   &\f{d\ov\xi_3}{ds}=\lambda(\ov\xi_3(s),\ov W(s)), \quad \ov\xi_3(t)=x, \quad \text{for}\  x \in
   [0,\ov\xi_2(t)],
    \\\label{2xi4}
   &\f{d\ov\xi_4}{ds}=\lambda(\ov\xi_4(s),\ov W(s)), \quad \ov\xi_4(t)=x, \quad \text{for}\  x \in
   [\ov\xi_2(t),1],
 \end{align*}
It is easy to see that there exist $\ov \alpha\in [0,t]$ and $\ov
\beta\in [0,1]$ such that
 \beq \ov\xi_3(\ov \alpha)=0 \quad \text{and } \quad \ov \xi_4(0)=\ov \beta.
 \eeq
In view of \eqref{test}, we compute
 \begin{align}\label{ov-rho-psi0}
     \int_0^1\ov \rho(t,x)\psi_0(x)\, dx
 & =\int_0^t u(\tau)\psi(\tau,0)\, d\tau + \int_0^1\rho_0(x)\psi(0,x)\, dx
   \nonumber\\
 &=\int_0^t u(\ov\alpha)\psi(\ov\alpha,0)\, d\ov \alpha
     + \int_0^1\rho_0(\ov \beta)\psi(0,\ov \beta)\, d\ov\beta
    \nonumber\\
 &=\int_0^t u(\ov \alpha)\psi_0(x)\, d\ov \alpha
   +\int_0^{1-\int_0^t\lambda(\ov \xi_1(\theta),\ov W(\theta))\, d\theta}
      \rho_0(\ov \beta)\psi_0(x)\, d\ov\beta
 \end{align}
By the definitions of $\ov \alpha,\ov \beta$ and similar to
\eqref{jacobi1}-\eqref{jacobi2}, we have
 \beq
  \f{d\ov \alpha}{dx}=\f{-1}{\lambda(0,\ov W(\ov\alpha))}e^{-\int_{\ov\alpha}^t {\lambda_x(\ov
   \xi_3(\theta),\ov W(\theta))}\, d\theta}
   \quad \text{and } \quad
 \f{d\ov \beta}{dx}=e^{-\int_0^t {\lambda_x(\ov
   \xi_4(\theta),\ov W(\theta))}\, d\theta}.
 \eeq
Thus, \eqref{ov-rho-psi0} is rewritten as
 \begin{align*}\label{equiv}
 &\quad \int_0^1\ov \rho(t,x)\psi_0(x)\, dx\nonumber\\
 &=\int_0^{\ov \xi_2(t)}\f{u(\ov \alpha)}{\lambda(0,\ov W(\ov \alpha))}
   e^{-\int_{\ov\alpha}^t \lambda_x(\ov \xi_3(\theta),\ov W(\theta))\, d\theta}\psi_0(x)\, dx
+\int_{\ov\xi_2(t)}^1\rho_0(\ov\beta)e^{-\int_0^t\lambda_x(\ov
 \xi_4(\theta),\ov W(\theta))\, d\theta}\psi_0(x)\, dx
 \end{align*}

Since $\psi_0\in C_0^1(0,1)$ and $t\in[0,\delta]$ are both
arbitrary, we obtain in $C^0([0,\delta];L^1(0,1)) \cap
L^{\infty}((0,\delta) \times (0,1))$ that
 \be \label{ov-rho}
 \ov\rho(t,x)=
   \begin{cases}
    \displaystyle \f{u(\ov\alpha)}{\lambda(0,\ov W(\ov\alpha))}
     \, e^{-\int_{\ov\alpha}^t \lambda_x(\ov\xi_3(\theta),\ov W(\theta)) d\theta},\quad
   & 0\leq x \leq \ov\xi_2(t) \leq 1,0\leq t\leq \delta,
   \\
   \rho_0(\ov\beta)\,  e^{-\int_0^t \lambda_x(\ov\xi_4(\theta),\ov W(\theta)) d\theta },\quad
   & 0\leq \ov\xi_2(t)\leq x \leq 1,0\leq t\leq \delta,
  \end{cases}
 \ee
which hence gives
 \begin{align*}
 \ov W(t)& =\int_0^1\ov\rho(t,x)\, dx
 \\& =\int_0^t u(\ov\alpha)d\ov\alpha
     +\int_0^{1-\int_0^t\lambda(\ov\xi_1(\theta),\ov W(\theta))d\theta} \rho_0(\ov\beta)d\ov\beta
    =F(\ov W)(t),   \quad  \forall t \in [0,\delta].
   \end{align*}
By \eqref{delta}, we claim that $\ov W\in\Omega_{\delta,M}$ and then
$\ov W \equiv W$ since $W$ is the unique fixed point of the map $W
\mapsto F(W)$ in $\Omega_{\delta,M}$. Consequently, we have
$\ov\xi_i\equiv \xi_i\ ( i=1,2,3,4)$ and $\ov
\alpha=\alpha,\ov\beta=\beta$. Finally, by comparing \eqref{rho-loc}
and \eqref{ov-rho}, we obtain $\ov\rho\equiv \rho$. This gives us
the uniqueness of the weak solution for small time.

\section*{Acknowledgements}
The authors would like to thank the professors Fr\'{e}d\'{e}rique
Cl\'{e}ment, Jean-Michel Coron for their interesting comments and
many valuable suggestions on this work.
%


\begin{figure}[htbp]
\begin{minipage}[b]{0.5\textwidth}\centering
\includegraphics[width=\textwidth]{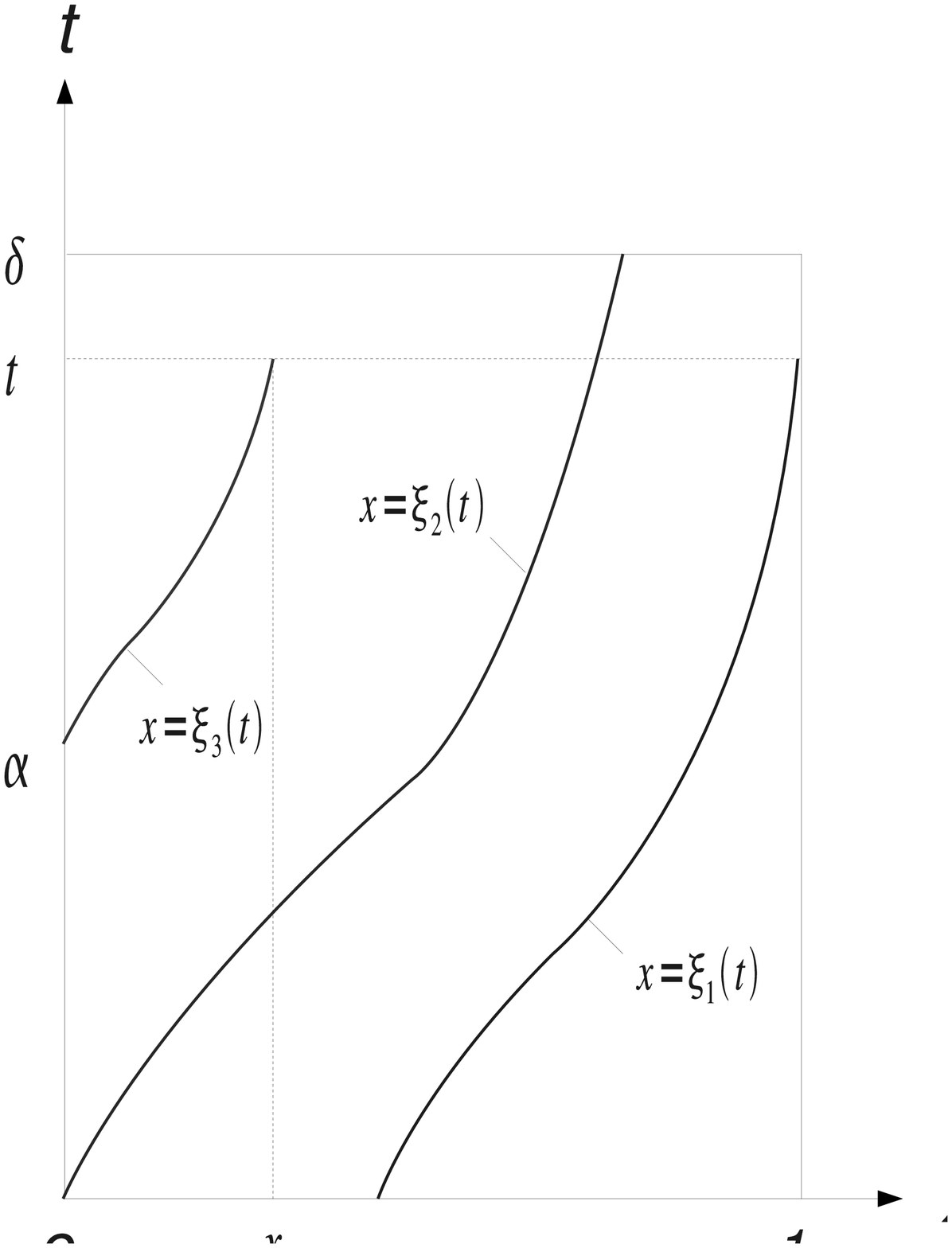}
\caption{Characteristics $\xi_1,\xi_2$ and $\xi_3$}  \label{Fig1}
\par\vspace{0pt}
\end{minipage}%
\hfill
\begin{minipage}[b]{0.5\textwidth}\centering
\includegraphics[width=\textwidth]{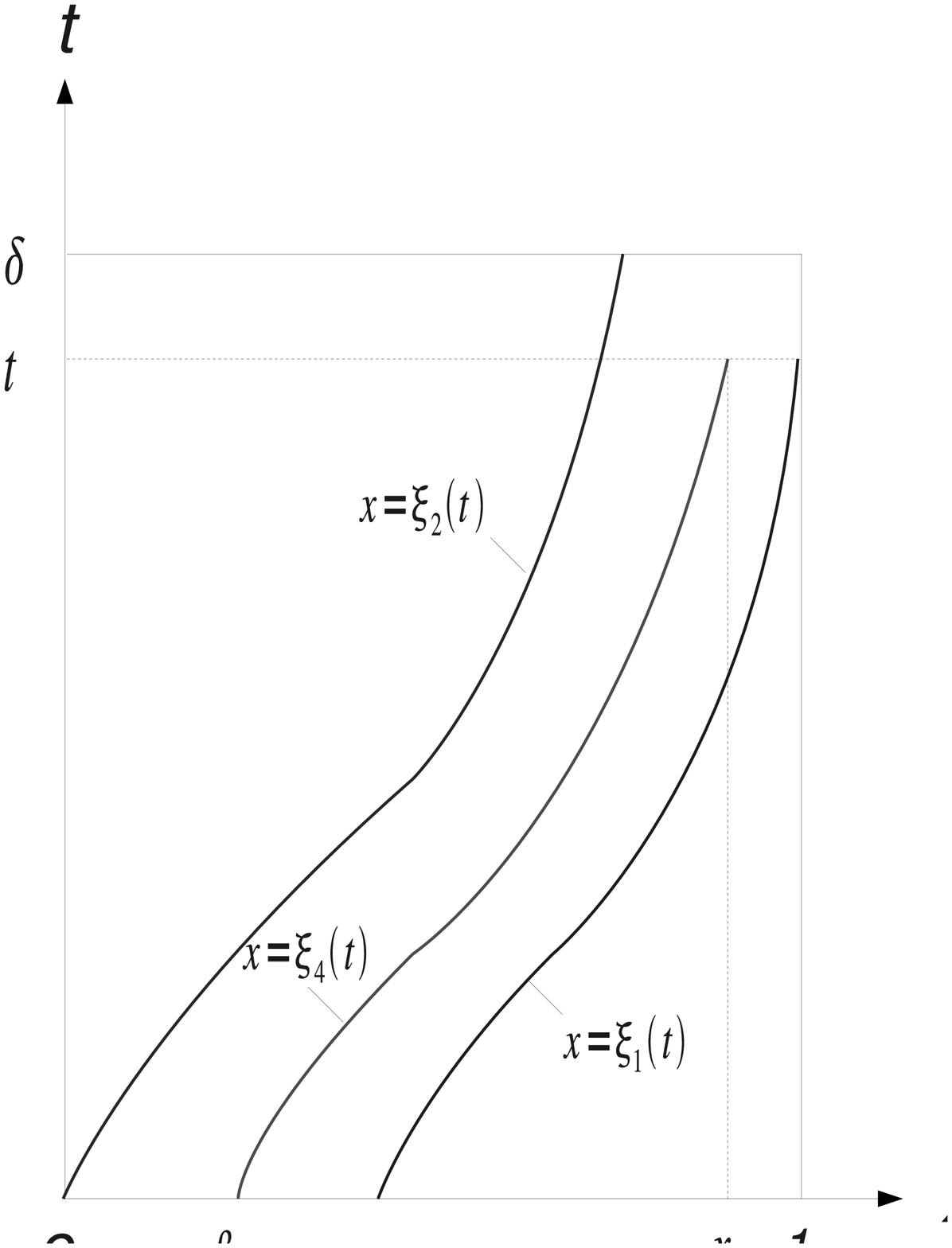}
\caption{Characteristics $\xi_1,\xi_2$ and $\xi_4$} \label{Fig2}
\par\vspace{0pt}
\end{minipage}%
\end{figure}

\vfill
\begin{figure}[htbp]
\begin{minipage}[b]{0.5\textwidth}\centering
\includegraphics[width=\textwidth]{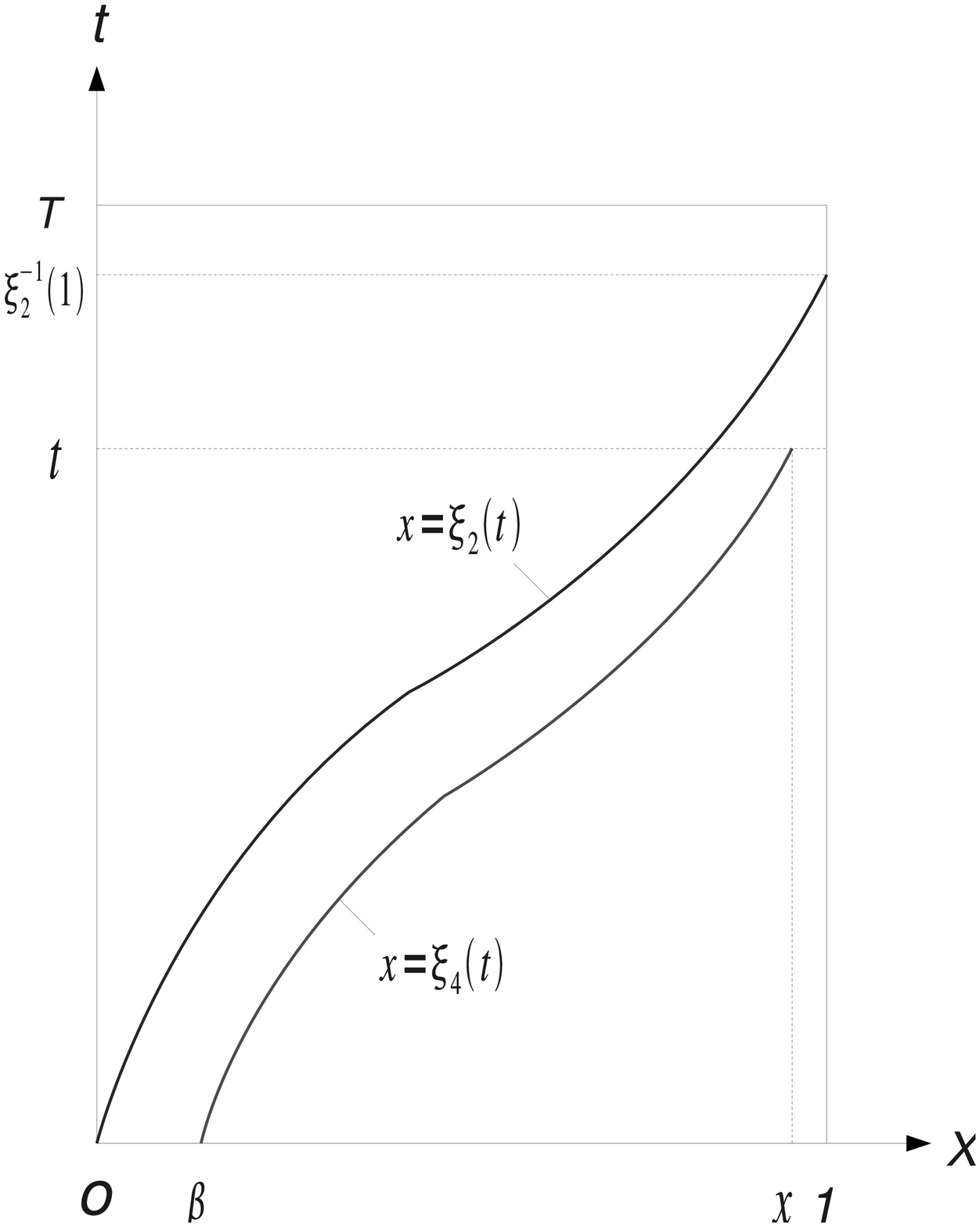}
\caption{Case $0\leq \xi_2(t)\leq x$, $0\leq t\leq \xi_2^{-1}(1)$}
\label{Fig3}
\par\vspace{0pt}
\end{minipage}%
\hfill
\begin{minipage}[b]{0.5\textwidth}\centering
\includegraphics[width=\textwidth]{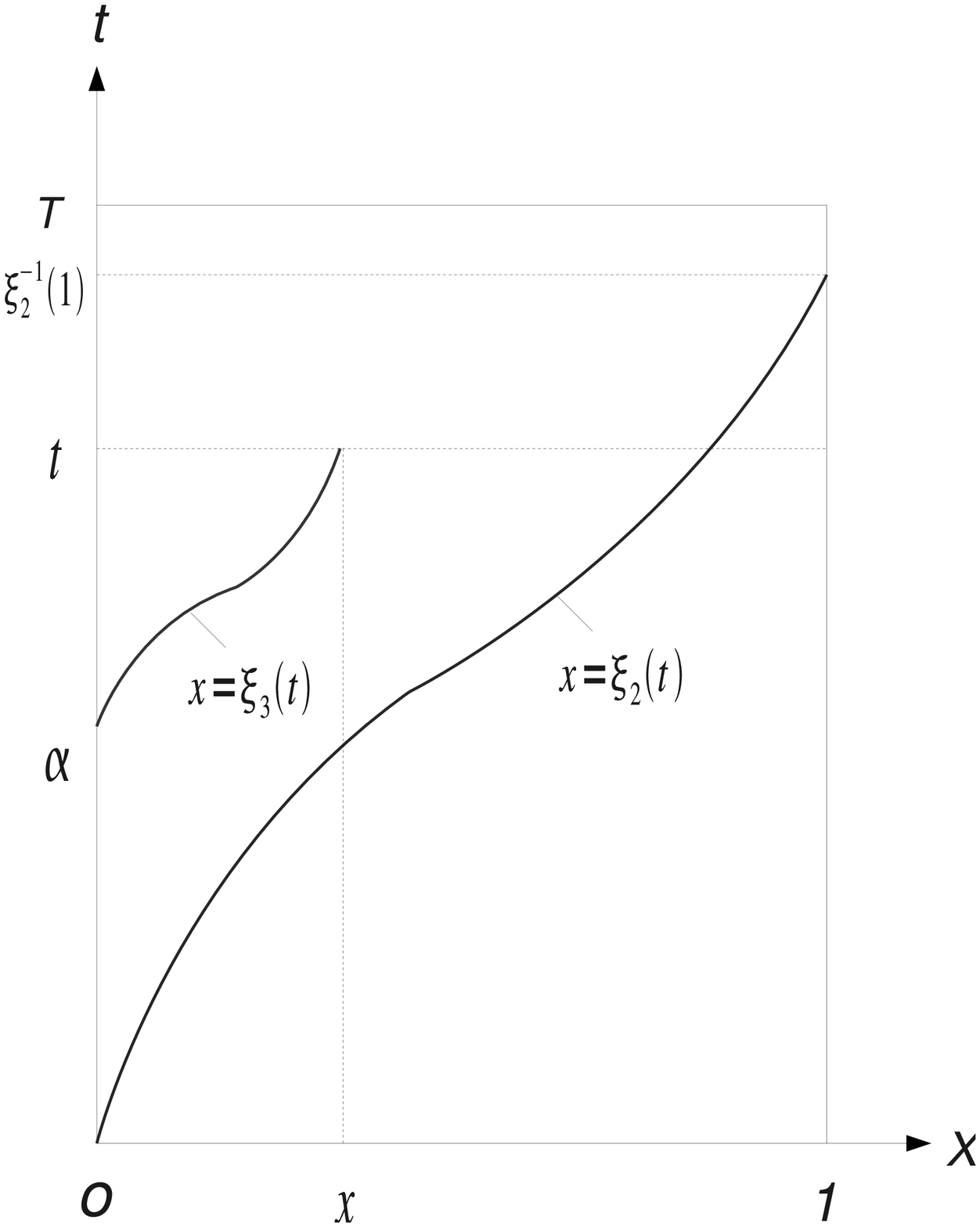}
\caption{Case $0\leq x\leq \xi_2(t)$, $0\leq t\leq \xi_2^{-1}(1)$}
\label{Fig4}
\par\vspace{0pt}
\end{minipage}%
\end{figure}

\vfill
\begin{figure}[htbp]
\begin{minipage}[b]{0.5\textwidth}\centering
\includegraphics[width=\textwidth]{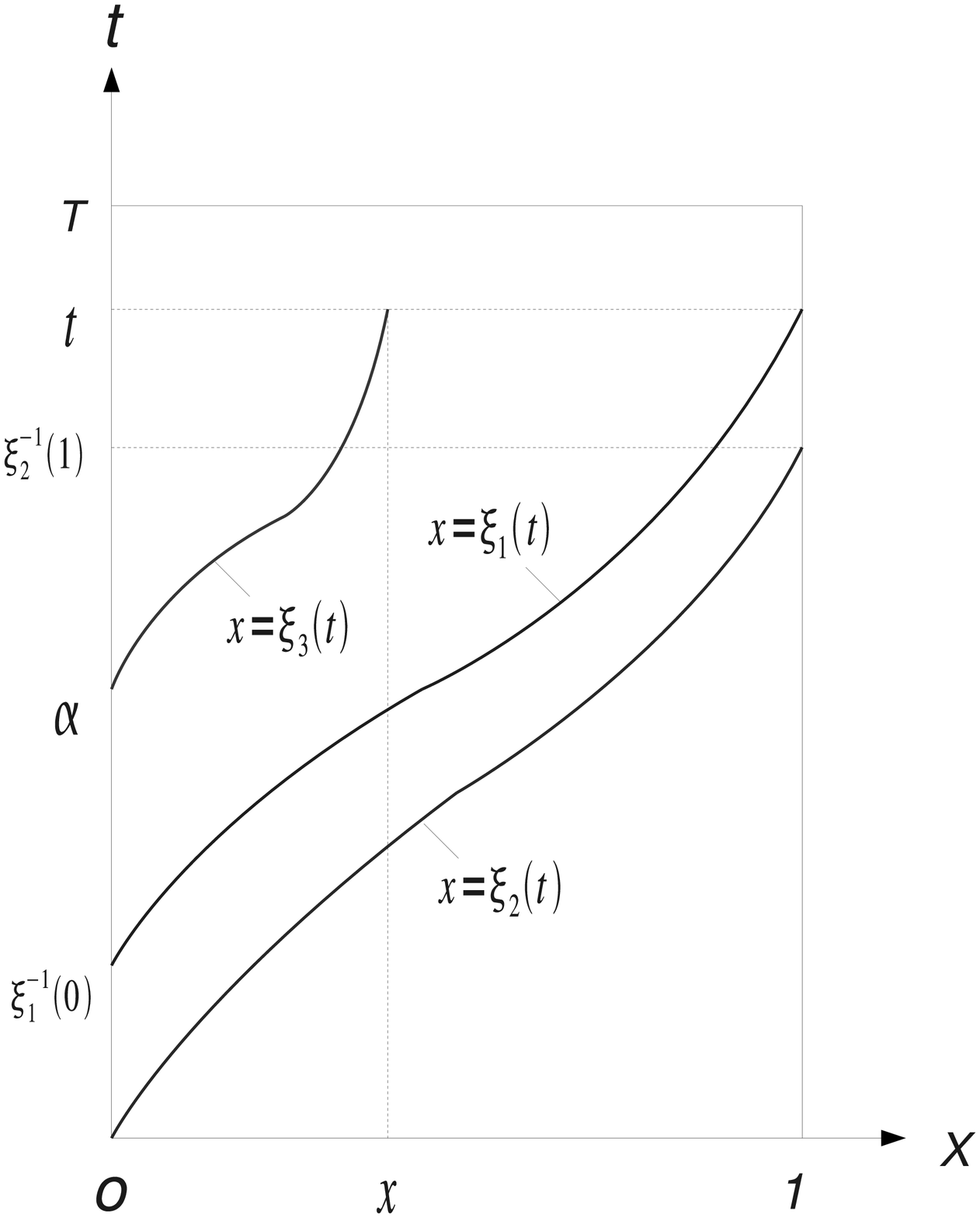}
\caption{Case $0\leq x\leq 1$, $\xi_2^{-1}(1)\leq t\leq T$}
\label{Fig5}
\par\vspace{0pt}
\end{minipage}%
\hfill
\begin{minipage}[b]{0.5\textwidth}\centering
\includegraphics[width=\textwidth]{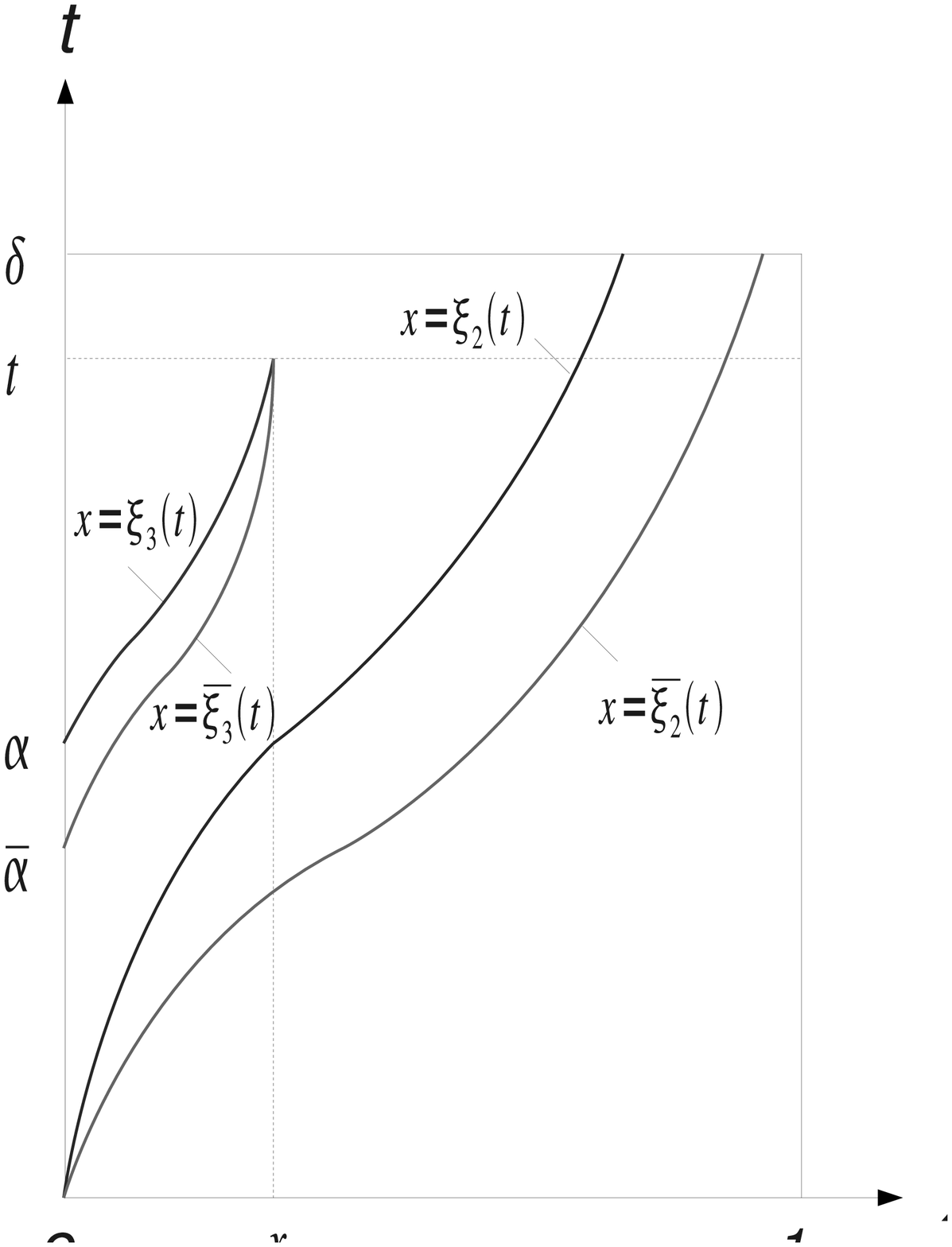}
\caption{Estimate for $x\in [0,\xi_2(t)]$} \label{Fig6}
\par\vspace{0pt}
\end{minipage}%
\end{figure}

\vfill
\begin{figure}[htbp]
\begin{minipage}[b]{0.5\textwidth}\centering
\includegraphics[width=\textwidth]{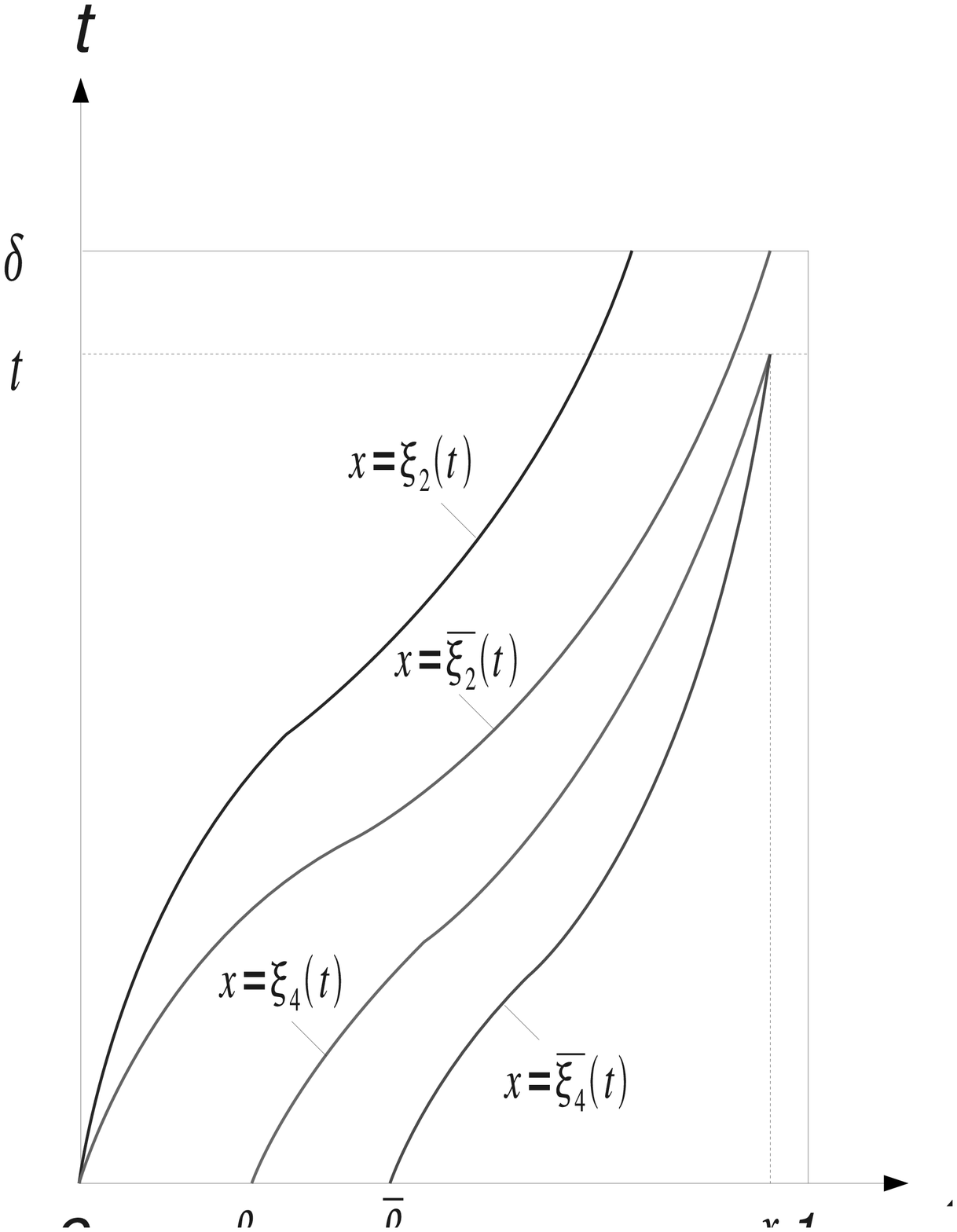}
\caption{Estimate for $x\in [\ov\xi_2(t),1]$} \label{Fig7}
\par\vspace{0pt}
\end{minipage}%
\hfill
\begin{minipage}[b]{0.5\textwidth}\centering
\includegraphics[width=\textwidth]{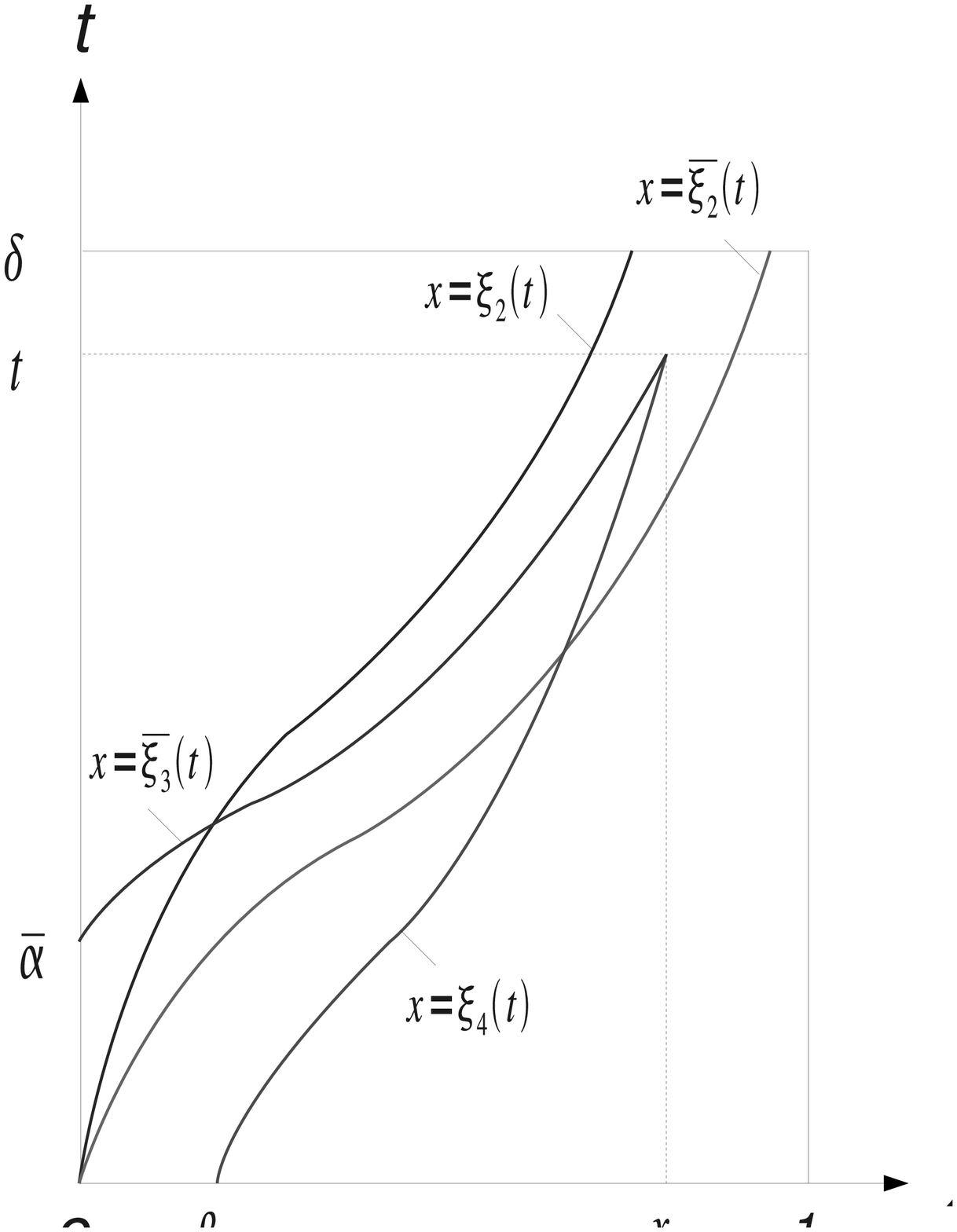}
\caption{Estimate for $x\in [\xi_2(t),\ov \xi_2(t)]$}  \label{Fig8}
\par\vspace{0pt}
\end{minipage}%
\end{figure}

\vfill
\begin{figure}[htbp]
\begin{minipage}[b]{0.5\textwidth}\centering
\includegraphics[width=\textwidth]{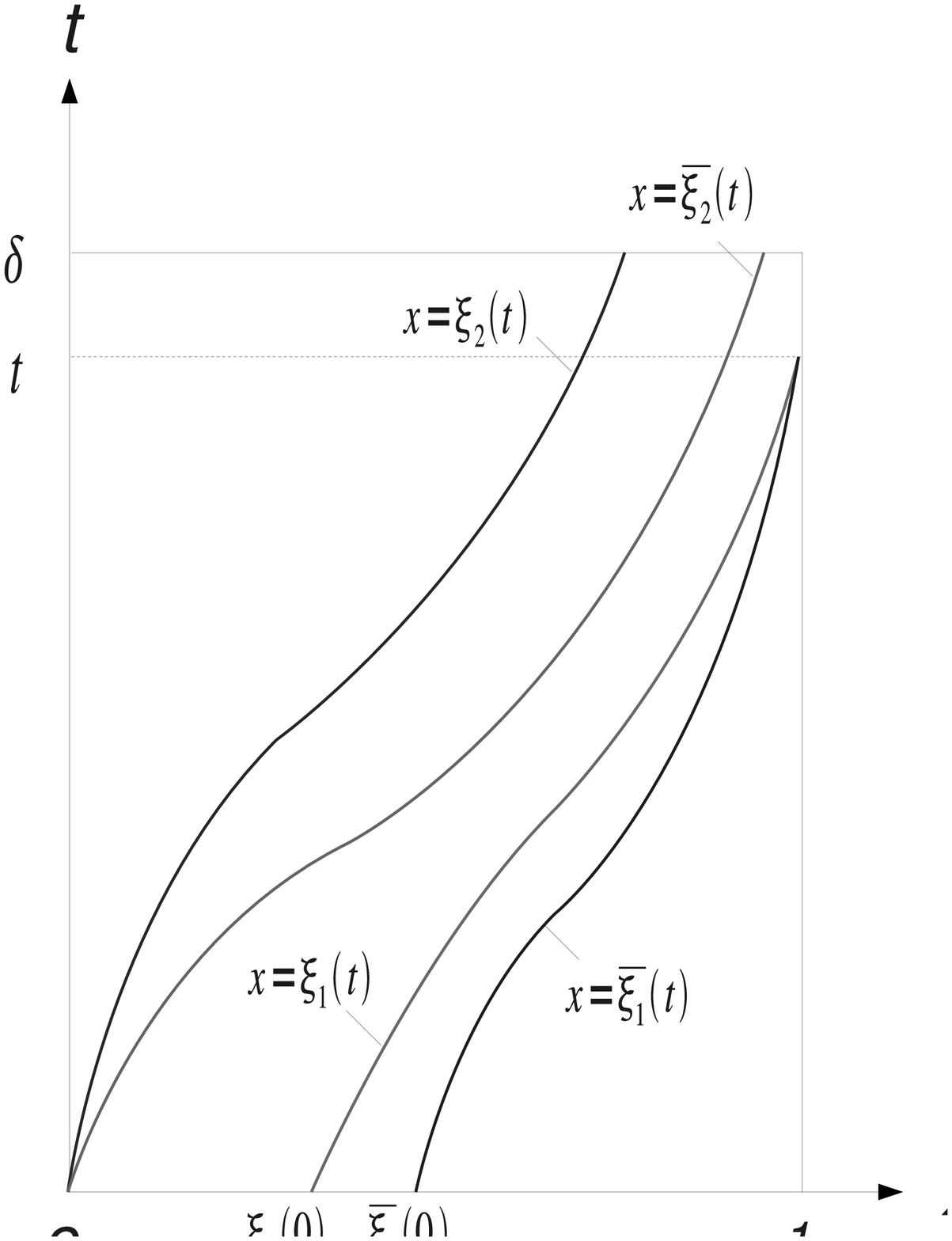}
\caption{Estimate of $|\ov y-y|$ for $t\in [0,\delta]$} \label{Fig9}
\par\vspace{0pt}
\end{minipage}%
\hfill
\begin{minipage}[b]{0.5\textwidth}\centering
\includegraphics[width=\textwidth]{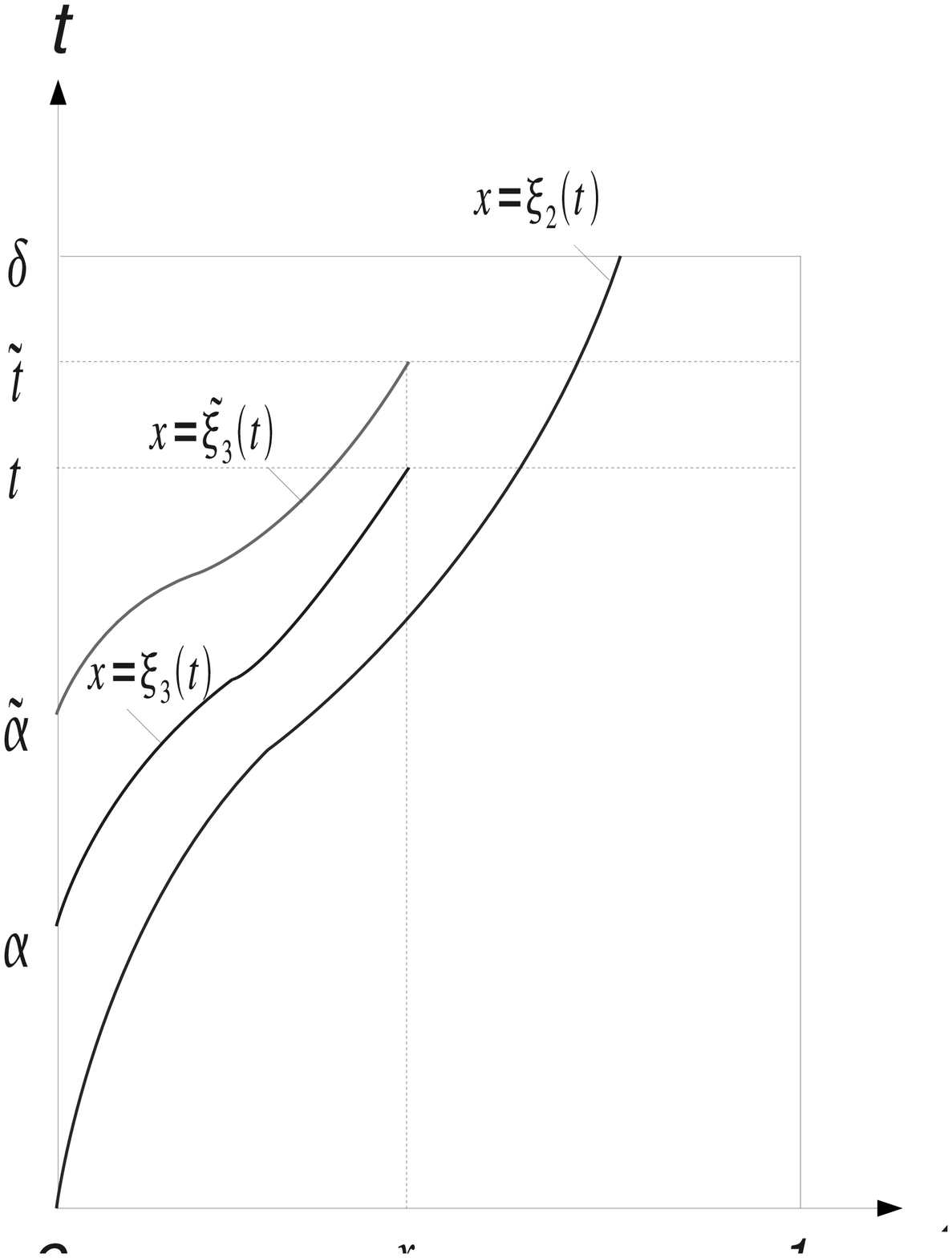}
\caption{Estimate for $x\in [0,\xi_2(t)]$} \label{Fig10}
\par\vspace{0pt}
\end{minipage}%
\end{figure}

\vfill
\begin{figure}[htbp]
\begin{minipage}[b]{0.5\textwidth}\centering
\includegraphics[width=\textwidth]{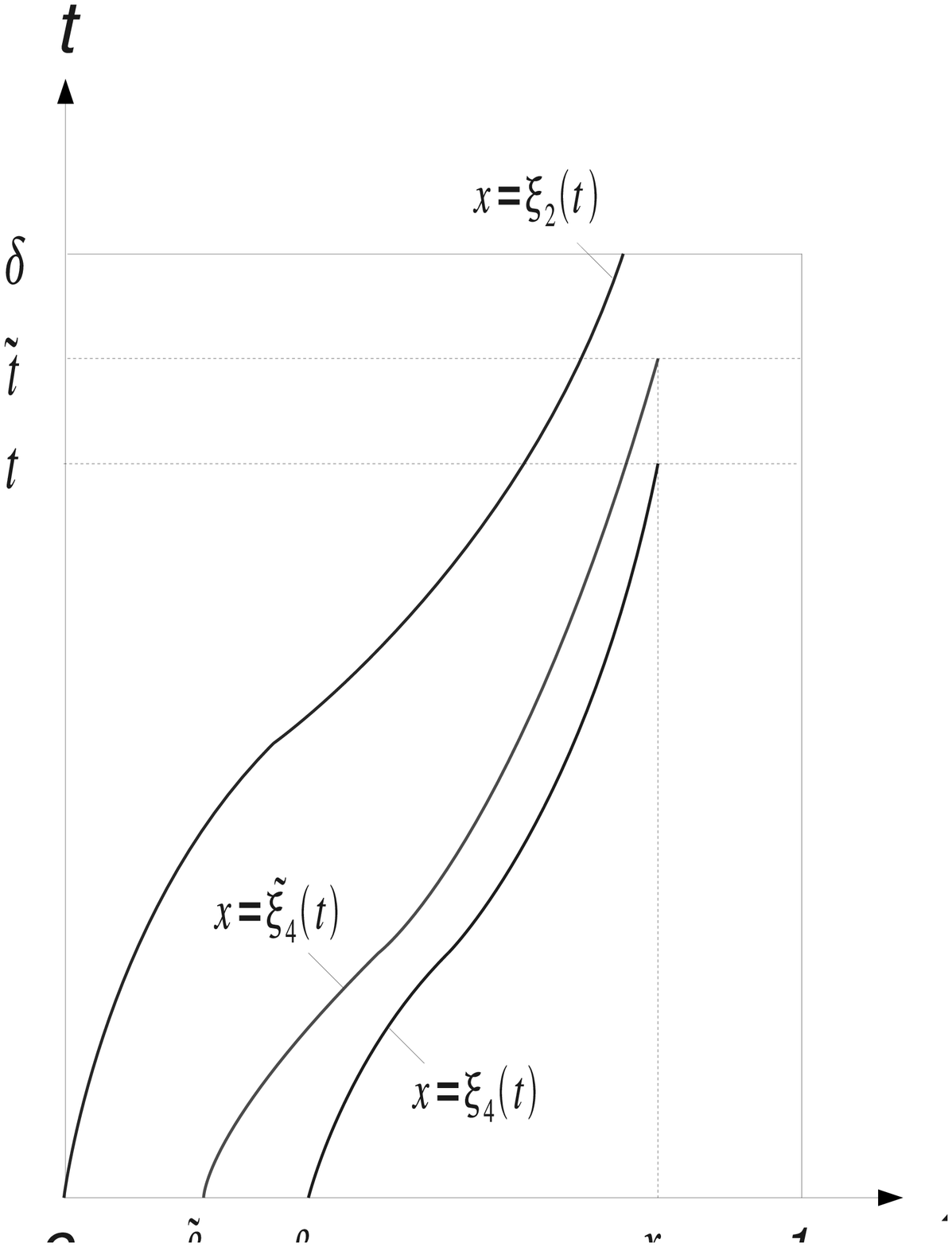}
\caption{Estimate for $x\in [\xi_2(\wt t),1]$} \label{Fig11}
\par\vspace{0pt}
\end{minipage}%
\hfill
\begin{minipage}[b]{0.5\textwidth}\centering
\includegraphics[width=\textwidth]{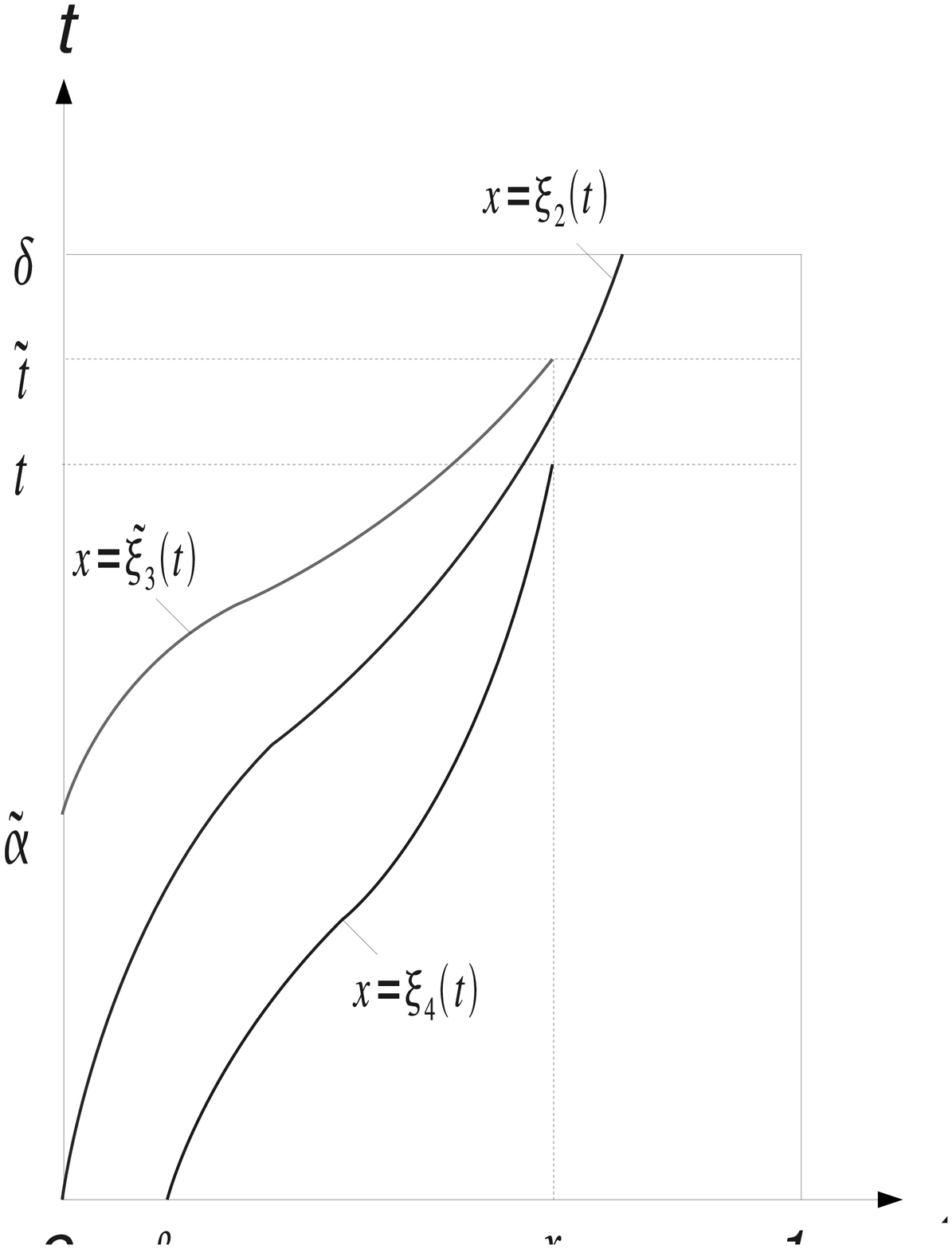}
\caption{Estimate for $x\in [\xi_2(t),\xi_2(\wt t)]$} \label{Fig12}
\par\vspace{0pt}
\end{minipage}%
\end{figure}

\end{document}